\documentclass[final,onefignum,onetabnum]{siamart171218}


\usepackage{lipsum}
\usepackage{amsfonts}
\usepackage{amsmath,amssymb,mathrsfs}
\usepackage{graphicx}
\usepackage{epstopdf}
\usepackage{subfigure}
\usepackage{caption}
\usepackage{enumitem}

\usepackage{algorithm,algpseudocode}
\usepackage{amsbsy,amscd}

\ifpdf
  \DeclareGraphicsExtensions{.eps,.pdf,.png,.jpg}
\else
  \DeclareGraphicsExtensions{.eps}
\fi


\newsiamremark{remark}{Remark}
\newsiamremark{hypothesis}{Hypothesis}
\crefname{hypothesis}{Hypothesis}{Hypotheses}
\newsiamthm{claim}{Claim}

\headers{Bridging and improving inverse solutions with data completion}{Tan Bui-Thanh and Qin Li and Leonardo Zepeda-N\'u\~nez}

\title{Bridging and Improving Theoretical and Computational Electric Impedance Tomography via Data Completion\thanks{Submitted to the editors DATE.
\funding{T.B.T. was partially funded by the National Science Foundation awards NSF-1808576 and NSF-CAREER-1845799; by the Defense Thread Reduction Agency award DTRA-M1802962; by the Department of Energy award DE-SC0018147; by KAUST; by 2018 ConTex award; and by 2018 UT-Portugal CoLab award. The work of Q.L. is supported in part by UW-Madison Data Initiative, Vilas Young Investigation Award and National Science Foundation under the grant DMS-1750488. The work of L.Z.-N. is supported in part by the National Science Foundation under the grant DMS-2012292. In addition, Q.L. and L.Z.-N. are supported by NSF TRIPODS award 1740707. The views expressed in the article do not necessarily represent the views of the any funding agencies. The authors are grateful to the supports.}}}

\author{
Tan Bui-Thanh\thanks{Department of Aerospace Engineering and
  Engineering Mechanics, The Oden Institute for Computational
  Engineering and Sciences, UT Austin,  Austin, Texas
  (\email{tanbui@oden.utexas.edu},
  \url{https://users.oden.utexas.edu/\string~tanbui/}).}
  \and Qin Li\thanks{Mathematics Department and Wisconsin Institute for Discovery, UW-Madison, Madison, WI
  (\email{qinli@math.wisc.edu}, \url{http://www.math.wisc.edu/\string~qinli/}).}
  \and  Leonardo Zepeda-N\'u\~nez\thanks{Mathematics Department, UW-Madison, Madison, WI (\email{lzepeda@math.wisc.edu}, \url{https://www.math.wisc.edu/\string~lzepeda/}).}
}
\usepackage{amsopn}

\makeatletter
\newcommand*{\addFileDependency}[1]{
  \typeout{(#1)}
  \@addtofilelist{#1}
  \IfFileExists{#1}{}{\typeout{No file #1.}}
}
\makeatother


\newcommand{\Grad} {\ensuremath{\nabla}}

\newcommand{\nor}[1]{\left\| #1 \right\|}
\newcommand{\snor}[1]{\left| #1 \right|}
\newcommand{\LRp}[1]{\left( #1 \right)}

\newcommand{\LRa}[1]{\left< #1 \right>}
\newcommand{\LRc}[1]{\left\{ #1 \right\}}

\renewcommand{\H}{H}
\newcommand{\Hone}{\H^1}

\newcommand{\C}{C}
\renewcommand{\L}{L}

\newcommand{\mc}[1]{\mathcal{#1}}

\renewcommand{\a}{\mathsf{a}}
\newcommand{\h}{h}

\renewcommand{\v}{v}
\newcommand{\vh}{\v^h}
\newcommand{\w}{w}
\newcommand{\wh}{\w^h}
\newcommand{\W}{W}
\newcommand{\Wh}{\W^h}

\newcommand{\DtN}{\Lambda}
\newcommand{\Pih}{\Pi^\h}

\newcommand{\Kn}{\mathsf{Kn}}

\newcommand{\rd}{\mathrm{d}}
\newcommand{\DtNh}{\Lambda^h}
\newcommand{\DtNa}[1]{\DtN_{#1}}

\newcommand{\DtNha}[1]{\DtNh_{#1}}
\newcommand{\DtNR}[1]{\tilde{\DtN}_{#1}}
\newcommand{\DtNRhat}[1]{\hat{\DtN}_{#1}}
\newcommand{\DtNhR}{\hat{\DtN}}
\newcommand{\DtNhRa}[1]{\tilde{\DtN}^h_{#1}}

\newcommand{\Nel} {\ensuremath{{N_\text{el}}}}
\newcommand{\K}{K}
\newcommand{\pK}{{\partial\K}}
\newcommand{\Kj}{{\K_j}}
\newcommand{\pOmega}{{\partial \Omega}}

\newcommand{\V}{{V}}

\newcommand{\Vh}{\V^h}
\newcommand{\VhO}{\Vh_0}
\newcommand{\phih}{\phi^\h}
\newcommand{\eval}[2][\right]{\relax
\ifx#1\right\relax \left.\fi#2#1\rvert}
\newcommand{\Poly}{{\mc{P}}}
\renewcommand{\P}{{\mathbb{P}^h}}
\newcommand{\I}{{\mathbb{I}}}

\renewcommand{\L}{L}

\newcommand{\Phih}{\Phi^\h}
\newcommand{\Phit}{\tilde{\Phi}}

\newcommand{\eqnlab}[1]{\label{eq:#1}}
\newcommand{\theolab}[1]{\label{theo:#1}}

\newcommand{\lemlab}[1]{\label{lem:#1}}

\newcommand{\theoref}[1]{\ref{theo:#1}}

\newcommand{\lemref}[1]{\ref{lem:#1}}
\newcommand{\eqnref}[1]{\eqref{eq:#1}}
\newcommand{\seclab}[1]{\label{sect:#1}}
\newcommand{\secref}[1]{\ref{sect:#1}}

\usepackage{enumitem,comment}

\begin{document}

\maketitle

\begin{abstract}
In computational PDE-based inverse problems,
a finite amount of data is collected to infer unknown parameters in the PDE. In order to obtain accurate inferences, the collected data must be informative about the unknown parameters. How to decide which data is most informative and how to efficiently sample it, is the notoriously challenging task of optimal experimental design (OED). In this context, the best, and often infeasible, scenario is when the full input-to-output (ItO) map, i.e., an infinite amount of data, is available:~This is the typical setting in many theoretical inverse problems, which is used to guarantee the unique parameter reconstruction. 
These two different settings have created a gap between computational and theoretical inverse problems, where finite and infinite amounts of data are used respectively. In this manuscript we aim to bridge this gap while circumventing the OED task. This is achieved by exploiting the structures of the ItO data from the underlying inverse problem, using the electrical impedance tomography (EIT) problem as an example. To accomplish our goal, we leverage the rank-structure of the EIT model, and formulate the ItO matrix\textemdash the discretized ItO map\textemdash as an $\mathcal{H}$-matrix whose off-diagonal blocks are low-rank. This suggests that, when equipped with the matrix completion technique, one can recover the full ItO matrix, with high probability, from a subset of its entries sampled following the rank structure: The data in the diagonal blocks is informative and should be fully sampled, while data in the off-diagonal blocks can be sub-sampled. This recovered ItO matrix is then utilized to present the full ItO map up to a discretization error, paving the way to connect with the problem in the theoretical setting where the unique reconstruction of parameters is guaranteed. This strategy achieves two goals:
\texttt{I)}{\em it bridges the gap between the finite- and infinite-dimensional settings for numerical and theoretical inverse problems} and \texttt{II)} {\em it improves the quality of computational inverse solutions}. We detail the theory for the EIT model, and provide numerical verification to both EIT and optical tomography problems.

\end{abstract}

\begin{keywords}
  Dirichlet-to-Neumann map, albedo operator, inverse problem, matrix completion, sparse, $\mathcal{H}$-matrix, electric impedance tomography, optical tomography
\end{keywords}

\begin{AMS}
  68Q25, 35R30, 15A83
\end{AMS}

\section{Introduction}
\seclab{intro}
Inverse problems\textemdash inferring unknown parameters in physical systems from indirect observations\textemdash are ubiquitous in engineering and all branches of sciences. The development of a deep theoretical understanding \cite{Uhlmann_annals,NakamuraUhlmann93,SylvesterUhlmann90} coupled with  the development of highly sophisticated algorithmic pipelines \cite{Tarantola84,MosegaardTarantola95,SymesCarazzone91} for solving inverse problems have fueled several breakthroughs in a myriad of different fields such as geophysics, astronomy, biomedical imaging, radar, spectrography, signal processing, communications, among many others \cite{Cheney_SAR:2001,Virieux_FWI:2017,Biondi:3D_seismic_imaging}. Several of such advances, e.g., magnetic resonance imaging (MRI) \cite{MRI_Schenk:1996}, computarized tomography (CT) \cite{Natterer_CT:2001}, and synthetic aperture radar (SAR) \cite{Cheney_SAR:2001}, permeates the modern life, thus making the study of inverse problems a subject of paramount importance, at both theoretical and algorithmic levels. 

In both theoretical and algorithmic formulations, the object that encodes the accessible knowledge of the unknown parameter is the input-to-output (ItO) map. Although the particular description of this map differs vastly depending on the modeling of the underlying physics, the ItO map generally encodes the impulse response (output) of the medium, or the parameters we seek to reconstruct, from a probing signal (input). At the theoretical level, one assumes that this map is an operator, which maps a functional space of adequate probing signals, to another functional space of the corresponding responses. At the practical and numerical level, there is only a finite
number of possible probing signals that one can use, and the impulse response can only be sampled by a limited amount of receivers, resulting in a finite number of output data. 


Although both theoretical and algorithmic studies seek to shed light on the mechanisms to infer  unknown parameters, they are often not consistent with each other. 
The theoretical study of inverse problems has mainly focused on
answering questions on the infinite-dimensional setting: suppose one knows the full ItO map, can the underlying unknown parameter, living in an infinite-dimensional function space, be uniquely and stably reconstructed? In a nutshell, this infinite-to-infinite approach relies on an infinite amount of data to reconstruct the parameter function, which itself has an infinite number of degrees of freedom.
This infinite-dimensional setting is certainly computationally infeasible. Thus in all algorithmic studies, one focuses on designing algorithmic pipelines to perform the reconstruction on the finite-dimensional setting: given a finite number of ItO measurements, how to extract the information to infer the unknown parameters, represented by finite-dimensional vectors? Unlike the theoretical infinite-to-infinite  approach, this practical finite-to-finite approach uses a finite amount of data pairs to reconstruct the parameter function characterized by a finite number of degrees of freedom.

It is reasonable to believe that the theory should provide guidance and theoretical guarantees for the algorithms' performance. In practice, however, they have mostly advanced in a disconnected manner. Indeed, when one translates the problem from the infinite-dimensional setting to a finite-dimensional one, a large amount of information is often lost. For example, from the theoretical perspective, we do not need to quantify the importance of each data pair since they will be all used. In reality, only a finite amount of data pairs are practically available; thus we ought to select the ones that best inform the parameter reconstruction. Which data pairs are most informative is typically unknown unless an optimal experimental design (OED) (see, e.g., \cite{Pukelsheim06,Chernoff72}) is solved. OED is however notoriously challenging and computationally expensive. 

To bridge the gap between the theoretical study on the infinite-dimensional setting and the numerical study on the finite-dimensional setting, and to maximally use the knowledge from theoretical results, it is necessary to understand the structure of the underlying problem to identify (ideally a small number of) data pairs that are informative about the unknown parameters and then, again exploiting the structure, to lift the information coded in the finite data pairs to the infinite dimensional setting where existing theoretical results are applicable. We stress that finding finitely small information amounts of data is important for real-world applications where data is often expensive and potentially cumbersome to obtain. 

In this manuscript we initiate a line of work to achieve the goal of bridging the gap between theoretical and computational inverse problems. 
In particular, we seek to exploit the structure of the ItO map, and hence the underlying physics of the problem under consideration, to select a subset of \emph{informative} entries in the ItO matrix  to complete the missing entries, thus recovering the full ItO matrix. We then take advantage of the completed ItO matrix in two aspects: 
\begin{enumerate}[label=\Roman*)]
\item {\em bridging the gap between the finite- and infinite-dimensional settings, and}
\item {\em improving the quality of computational inverse solution}.
\end{enumerate}
For {I)}, the completed ItO matrix  is lifted to the ItO map in the infinite dimensional setting where the unique reconstruction of the unknown parameter is guaranteed, which in turn ensures algorithmic convergence.
For {II)}, the completed ItO data matrix is used to reconstruct the parameter through a minimization algorithm. Since completed data contains more information about the parameter than the originally incomplete one, completed data reaches an empirically more accurate inverse solution than its incomplete counterpart. 
Even without matrix completion, the incomplete but informative data facilitate more accurate reconstruction compared to using the same amount of otherwise arbitrary data. 

As have been discussed, the keys to realize our research program are a) the ability to leverage the structure of the underlying problem to sample only a fraction of data and to complete the missing ones, and b) the availability of theoretical results of infinite dimensional inverse problems. The actual executions are thus problem-specific. We choose electric impedance tomography (EIT) for this paper. 
The ItO map in this case is the Dirichlet-to-Neumann (DtN) map and its discretization, the DtN matrix, which possesses 
a $\mathcal{H}$-matrix structure. This allows us to predict the relative importance of entries of the DtN matrix, and sample them accordingly following the $\mathcal{H}$-matrix partitioning. The selected entries are then used to uncover the missing ones through a matrix-completion algorithm \cite{Recht_simple}. Finally, the completed DtN matrix is lifted to the DtN map, allowing us to integrate the theoretical results~\cite{allessandrini_lipschitz,Rondi_e_to_N} on EIT to show the convergence and uniqueness of the reconstructed parameter with a high probability.


\textbf{Outline:} This paper is  organized as follows: we present the whole bridging framework including the data selection process, finite element discretization, a matrix completion algorithm, and a generic parameter reconstruction in Section~\ref{sec:algorithm}. Rigorous results justifying the framework for the EIT problem are presented in Section~\ref{sec:property}. Various numerical results, including showing improved inverse solutions using data completion, are presented in Section~\ref{sec:numerics} to validate our approach for both EIT and optical tomography problems. Section~\ref{sec:conclusions} concludes the paper with future works.

We point out that despite the matrix completion process being almost absent from the inverse problem literature, it was used to solve PDEs in the forward problems~\cite{Solving_PDE_manifold}. Moreover, data-driven approaches have been widely used, in which the most notorious example is the application of compressed sensing to MRI~\cite{Compressed_Sensing_for_MRI}, which already has commercial applications~\cite{CS_MRI_siemmens}.


\section{Bridging framework driven by EIT}\label{sec:algorithm}

Throughout the paper we use the Calder\'on problem as the motivating example. This is considered as a model problem from EIT, in which the voltage is applied on the surface of tissues, and the electric intensity is measured on the surface. By changing voltage configurations, many sets of voltage-to-intensity files can be obtained to infer the conductivity of the medium in the tissue. Mathematically, this translates to utilizing the Dirichlet-to-Neumann (DtN) map to reconstruct the diffusion coefficient in the elliptic equation,
\begin{equation}\label{eqn:elliptic}
\left \{ \begin{array}{rc}
-\nabla\cdot (a(x)\nabla u) = 0,& x\in\mathcal{D}\subset\mathbb{R}^n, \\
u|_{\partial\mathcal{D}} = \phi, & 
\end{array} \right .
\end{equation}
where the input $\phi$ serves as the Dirichlet boundary condition (voltage). The output is also taken on the boundary, and is of Neumann type (electric intensity):
\[
d = a\partial_n u|_{\partial\mathcal{D}}\,,
\]
where $n$ stands for the unit outward normal direction on $\partial\mathcal{D}$. The ItO map from $\phi$ to $d$ is thus termed the voltage-to-intensity map, or mathematically, the DtN map. This map is parameterized by the medium conductivity  $a(x)$:
\[
\Lambda_a: \phi \to d\,,
\]
where the dependence on the conductivity is reflected in the subscript.

Assume $\mathcal{D}$ is a polygonal domain and $\phi \in H^{1/2}(\partial\mathcal{D})$, 
the weak formulation of \eqref{eqn:elliptic} reads: Find $u \in H^1(\mathcal{D})$ with $u|_{\partial\mathcal{D}} = \phi$ such that 
\begin{equation}\label{eqn:ellipticWeak}
\int_{\mathcal{D}}a\,\Grad u \cdot \Grad \v\,\rd x = 0\,, \quad \forall \v \in \Hone_0\LRp{\mathcal{D}}\,.
\end{equation}
The DtN map $\DtNa{a}\phi$ is defined as the following bilinear form
\begin{equation}\label{eqn:DtNa}
\LRa{\DtNa{a}\phi,\psi}:= \int_{\partial\mathcal{D}}a\partial_n u \psi\,\rd x\,,
\end{equation}
where $u$ solves~\eqref{eqn:ellipticWeak}. Using Green's identity and \eqref{eqn:ellipticWeak}:
\begin{equation*}
\LRa{\DtNa{a}\phi,\psi}:= \int_{\mathcal{D}}a\,\Grad u \cdot \Grad \Psi\,\rd x\,.
\end{equation*}
Here $\Psi \in H^1\LRp{\mathcal{D}}$ can be any extension of $\psi$ such that $\eval{\Psi}_{\partial\mathcal{D}} = \psi$. For the rest of the paper, $a(x)$ is assumed to be piece-wise constant 
 and is represented uniquely by the vector $\mathsf{a}$ containing its values, 
and we thus use $a$ and $\mathsf{a}$ interchangeably.

\subsection{Sketch of the DtN map discretization hierarchy} In the numerical setup, the solution and the measurements are all discretized and represented by finite dimensional vectors. The discrete DtN map $\DtNha{\a}$ is therefore a matrix. In this context, the measurements can be viewed as entries in this matrix. 
Only a small number of measurements are taken in experiments, meaning a small number of the entries in the DtN matrix are available. In other words, a subset of entries $\Omega$ of $\DtNha{\a}$ are observed and the rest are unavailable.
This is translated in the following hierarchy of increasingly reduced objects
\begin{equation}
\eqnlab{DtNhierarchy}
\Lambda_{{\a}}\to\Lambda_{\mathsf{a}}^h\to\Lambda^h_{\mathsf{a}}|_\Omega\,.
\end{equation}
Here, again, $\Lambda_a$ is the DtN map, $\Lambda_{\mathsf{a}}^h$ the DtN matrix (discretization of $\DtNa{\a}$, whose size depends on $h$, the mesh size), and $\Omega$ is a subset of matrix indices, indicating where measurements are taken. This reduction process is described in details in section~\ref{sec:setup}.

\subsection{Sketch of reversing the DtN map discretization hierarchy} 
\seclab{reverse}
For the reconstruction, we aim to reverse the hierarchy in \eqnref{DtNhierarchy}. In particular, we start from $\Lambda^h_{\mathsf{a}}|_\Omega$, and by choosing proper data and proper completion algorithms we obtain the full DtN matrix $\DtNha{\a}$. This then gets lifted up to represent the DtN map $\DtNa{\a}$ up to a discretization error that depends on $h$. Due to the involvement of discretization and reconstruction error, the exact recovery of $\DtNha{\a}$ (and hence $\DtNa{\a}$) is not available. We denote $\tilde{\Lambda}_{\mathsf{a}}^h$, $\DtNR{\a}$ and $\tilde{\mathsf{a}}$ the reconstructed approximations to $\DtNha{\a}$, $\DtNa{\a}$, and $\a$ respectively. $\Omega$ is judiciously selected, such that

\begin{equation}\label{eqn:close_recons_matrix}
\tilde{\Lambda}_{\mathsf{a}}^h\sim \Lambda_{\mathsf{a}}^h \,,
\end{equation}
where $\sim$ means close in some sense (to be defined later).
For small $h$, we lift the matrix back to the map and need to justify:
\begin{equation}\label{eqn:close_recons_h}
\DtNR{\a} \sim \Lambda_{{a}}. 
\end{equation}
Finally 
we seek to establish the closeness of the reconstruction of the media:
\begin{equation}\label{eqn:close_recons_a}
\tilde{\Lambda}_{\mathsf{a}}^h\sim \Lambda_{\mathsf{a}}^h\, \quad \Rightarrow \quad \DtNR{\a} \sim \Lambda_{{a}}\,\quad\Rightarrow\quad \tilde{\mathsf{a}}\sim {a}\,.
\end{equation}

In Section~\ref{sec:setup} we lay out the numerical setup. We recover $\tilde{\Lambda}^h_{\mathsf{a}}$ from the subsampled $\Lambda^h_{\mathsf{a}}|_\Omega$, and provide intuition to~\eqref{eqn:close_recons_matrix} in Section~\ref{sec:recons_map}. Recall from section \secref{intro} that the construction of $\tilde{\Lambda}^h_{\mathsf{a}}$ is twofold: \texttt{I)}{\em bridging the gap} and \texttt{II)} {\em improving the quality of computational inverse solution}. 
The proofs for ~\eqref{eqn:close_recons_matrix}, ~\eqref{eqn:close_recons_h}, and~\eqref{eqn:close_recons_a} are given in Section~\ref{sec:property}, which accomplish task  \texttt{I)} of {\em bringing the gap between theoretical and computational EIT.}
Section~\ref{sec:recons_parameter} discusses a practical computational algorithm for task \texttt{II)} which aims to approximately reconstruct $\a$ from the DtN matrix $\tilde{\Lambda}^h_{\mathsf{a}}$.  It is important to point out that\textemdash unlike traditional computational inverse problems that uses  $\Lambda^h_{\mathsf{a}}|_\Omega$, the incomplete DtN matrix, to reconstruct $\a$\textemdash we deploy $\tilde{\Lambda}^h_{\mathsf{a}}$, the completed DtN matrix, to reconstruct $\a$. As shall be shown 
in section \ref{sec:numerics}, our approach {\em  improves the parameter reconstruction substantially}. Indeed, the reconstructions using $\tilde{\Lambda}^h_{\mathsf{a}}$ and the exact DtN matrix $\DtNha{a}$ are visibly identical while the reconstruction directly from $\Lambda^h_{\mathsf{a}}|_\Omega$ is completely off.

\subsection{DtN map discretization hierarchy}\label{sec:setup}
In what follows we provide details of the DtN map discretization hierarchy \eqnref{DtNhierarchy}.

\noindent \textbf{From $\Lambda_a$ to $\Lambda_{\mathsf{a}}^h$}: Numerically, we first partition the domain $\mc{D}$ into $\Nel$ non-overlapping shape-regular affine elements $\Kj, j = 1,\hdots,\Nel$ with Lipschitz boundaries. Denote $\mc{D}^h := \cup_{j=1}^\Nel \Kj$, $\overline{\mc{D}} = \overline{\mc{D}}^h$ the discrete space, and $h=\max_{j}\text{diam}\LRp{\Kj}$ the mesh size, we construct the standard linear Lagrange finite element (FE) space
\[
 \Vh := \LRc{v \in C^0(\mathcal{D}): \eval{v}_{K_j} \in \Poly^1(K_j)\,,\forall j} \subset \C^0\LRp{\mc{D}} \subset \Hone\LRp{\mc{D}} \,,
\]
as the discrete solution space, and
\[
\Vh\LRp{\partial\mc{D}} := \LRc{v \in C^0\LRp{\partial\mc{D}}: \eval{v}_{\pK\cap\partial\mc{D}} \in \Poly^1\LRp{\pK\cap\partial\mc{D}}} = \text{span}\{\phi_i\}
\]
as the discrete boundary condition space where $\phi_i$ are the linear nodal (Lagrange) basis functions on $\partial\mc{D}$. Here $\Poly^1\LRp{\K}$ is the space of polynomials of degree at most $1$ on $\K$. To project the boundary condition from the continuous level to the discrete one, we define the projection operator
\begin{equation}\label{eqn:projection}
\Pih: \H^{1/2}\LRp{\partial\mc{D}} \ni \phi \mapsto 
\phih:= \Pih\phi\in \Vh\LRp{\partial\mc{D}}
\end{equation}
such that
\[
\nor{\phi - \phih}_{\H^{1/2}\LRp{\partial\mc{D}}} = \nor{\phi - \Pih\phi}_{\H^{1/2}\LRp{\partial\mc{D}}} = \inf_{\wh \in \Vh\LRp{\partial\mc{D}}} \nor{\phi - \wh}_{\H^{1/2}\LRp{\partial\mc{D}}}\,.
\]
 The discretization of the weak formulation~\eqref{eqn:ellipticWeak} reads: Find $u_h \in V^h$ such that $\eval{u^h}_{\partial\mc{D}} = \phih=\Pih \phi$, and   
\begin{equation}\label{eqn:FEMeqn}
\int_{\mc{D}}\a\,\Grad u^h \cdot \Grad \vh\,\rd x = 0, \quad \forall \vh \in \VhO\,,
\end{equation}
where $\VhO := \LRc{\v \in \Vh\LRp{\mc{D}}: \eval{\v}_{\partial\mc{D}} = 0}$.

The discretized DtN map $\DtNha{a}$ is bilinear on $\Vh\LRp{\pOmega}$: for $\phih \in  \Vh\LRp{\pOmega}$ and $\wh \in \Vh\LRp{\pOmega}$
\begin{equation}
\eqnlab{DtNha}
\LRa{\DtNha{a}\phih,\wh}:=
\int_{\Omega}a\,\Grad \Phi^h \cdot \Grad \Wh\,d\Omega,
\end{equation}
where $\Wh$ is any extension of $\wh$ from $\Vh\LRp{\pOmega}$ to $\Vh\LRp{\Omega}$ such that $\eval{\Wh}_{\pOmega} = \wh$ and $\Phi^h$ is the FE solution obtained from \eqref{eqn:FEMeqn}. The DtN matrix is the matrix representation of $\DtNha{a}$, and we abuse the notation and still call it $\DtNha{a}$. It can be constructed as follows.
Let $\mathsf{d}$ be the FEM discretization of $d$, and $\mathsf{S}$ the FEM stiffness matrix, then the numerical solution $\mathsf{u}^h$ is 
\[
\mathsf{u}^h = \left(\mathsf{S}^{ii}\right)^{-1}\cdot\mathsf{S}^{ib}\cdot\phi^h\,,
\]
where 
\[
\mathsf{S} = \left [ \begin{array}{cc} \mathsf{S}^{ii} & \mathsf{S}^{ib}\\
                                   \mathsf{S}^{bi} & \mathsf{S}^{bb}
                     \end{array}\right], 
\]
$i$ stands for the collection of the indices of the interior degrees of freedom, and $b$ is for the degrees of freedom at the boundary. Furthermore, denote $\mathsf{M}$ the map from the discrete solution $\mathsf{u}^h$ to the discrete Neumann data on the boundary $\partial\mathcal{D}^h$,  the DtN matrix $\Lambda^h_{\mathsf{a}}$ can be formed as
\begin{equation}\label{eqn:lambda_h_a_formula}
\mathsf{d} = \Lambda^h_{\mathsf{a}} \cdot\phi = \mathsf{M}\cdot\left(\mathsf{S}^{ii}\right)^{-1}\mathsf{S}^{ib}\cdot\phi\,,\quad\text{with}\quad \Lambda^h_{\mathsf{a}} = \mathsf{M}\cdot\left(\mathsf{S}^{ii}\right)^{-1}\mathsf{S}^{ib}\,.
\end{equation}
Note that $\Lambda^h_{\mathsf{a}}$ is a square matrix of size $|\partial\mathcal{D}^h|\times|\partial\mathcal{D}^h|$, where 
$\snor{\partial\mathcal{D}^h}$ denotes the number of  grid points on $\partial\mathcal{D}^h$.


\noindent \textbf{From $\Lambda_{\mathsf{a}}^h$ to $\Lambda_{\mathsf{a}}^h|_\Omega$}: 
In practice, only a small number experiments can be conducted, and in each experiment, only  a small number of measurements can be  taken. For notational convenience, we assume the input $\phi$ is chosen from the set basis functions $\phi_i$. 
In this case the 
$ij$th component of the data matrix $\mathsf{d}$ is exactly the $ij$th entry of the DtN matrix, i.e.,
\[	
d_{ij} \sim  \Lambda^h_{\mathsf{a},ij}\,,\quad (i,j)\in \Omega\subset [1:|\partial\mc{D}^h|]^2\,.
\]
Here we use $\sim$ instead of $=$ to account for potential measuring errors, and $\Omega$, referred to as a mask, is a subset of all indices of $\Lambda^h_{\mathsf{a}}$.


\subsection{Reverse DtN map discretization hierarchy}\label{sec:recons_map}

As it was argued in section \secref{intro}, some data pairs are more informative than the others. Choosing the most informative data, or equivalently, selecting the right mask $\Omega$, is of paramount importance in recovering the missing entries in the DtN matrix $\DtNha{a}$. Recall from section \secref{reverse} that, due to errors in the discretization and reconstruction process, we can only obtain an approximation $\tilde{\Lambda}^h_{\mathsf{a}}$ of $\DtNha{a}$. In the following we exploit the structure of the DtN matrix $\DtNha{a}$ to determine $\Omega$ and employ a matrix completion technique such that
 $\tilde{\Lambda}^h_{\mathsf{a}}$ is close to ${\Lambda}^h_{\mathsf{a}}$.

To describe the structure of the DtN matrix, we exploit the concept of $\mathcal{H}$-matrices. With a proper decomposition, the DtN matrix can be partitioned into several low-rank blocks regardless of their size. This decomposition allows us to utilize the matrix completion type methods for the low rank blocks that are not applicable to the full DtN matrix as it is often of full rank. In the following we briefly review the matrix completion method in section~\ref{sec:matrix_completion}, and evaluate the matrix structure of $\Lambda_{\mathsf{a}}^h$ in section~\ref{sec:H_matrix}. The full completion algorithm is presented in Algorithm~\ref{alg:reconstructh}.


\subsubsection{Matrix completion}\label{sec:matrix_completion}

Matrix completion has been a popular topic for a decade due to its applications in recommendation systems, including the famous Netflix problem \cite{NetflixProblem}. The goal is to complete the entries in a matrix from a partial knowledge of its entries. To be more specific, let a generic $\mathsf{A} \in \mathbb{R}^{n \times n}$ be the to-be-completed matrix, of which only some of its entries are known. In this setting $\Omega$, with $|\Omega|=m$, is the index set where the entries are known, and $a_{ij}$ the given values with $(i,j)\in\Omega$.

There exist a number of algorithms that aims to reconstruct the entries \cite{Survey_matrix_completion,Hofmann2004LatentSM,Goldberg92usingcollaborative,Bui-ThanhDamodaranWillcox04}. We adopt the approach proposed in~\cite{matrix_completion_original}. Under the assumption that the matrix $\mathsf{A}$ is of low rank ($r\ll n$), we seek to minimize the nuclear norm $\|\mathsf{A}\|_\ast$, such that the matrix's evaluations at certain locations are fixed by the given data, namely $\mathsf{A}_{ij}=a_{ij}$ for $(i,j)\in\Omega$. The minimization problem now becomes:
\begin{equation}\label{eqn:opt_nuclear}
\min_\mathsf{A} \|\mathsf{A}\|_\ast\,,\quad\text{s.t.} \quad \mathsf{A}_{ij} = a_{ij}\,, \quad (i,j)\in\Omega\,,\quad |\Omega|=m\,.
\end{equation}
The objective function $\|\mathsf{A}\|_\ast$ is the sum of all singular values of $\mathsf{A}$. It can be viewed as the relaxation from $\ell_0$-norm of singular values ($\text{rank}\{\mathsf{A}\}$) to its $\ell_1$-norm~\cite{L1L0_matrix}.

One important advantage of working with~\eqref{eqn:opt_nuclear}
is that it is a convex optimization problem that can be solved efficiently with interior point methods. It is important to point out that this convex relaxation does not come with an accuracy sacrifice: it is found that under very mild conditions\textemdash\emph{decoherent} and \emph{delocalization} conditions\textemdash the solution of minimizing $\|\mathsf{A}\|_\ast$ coincides with the solution of minimizing $\text{rank}\{\mathsf{A}\}$. 

We next recall the \emph{decoherent} and \emph{delocalization} conditions~\cite{matrix_completion_original}. 
Let
\[
\mathsf{A} = \mathsf{U}\Sigma \mathsf{V}^\top\,
\]
be the singular value decomposition of $\mathsf{A}$.
\begin{definition}
Let $\mathsf{W}$ be a subspace of $\mathbb{R}^n$ of dimension $r$ and $\mathcal{P}_{\mathsf{W}}$ be the orthogonal projection onto $\mathsf{W}$. Then the coherence index of $\mathsf{W}$ is defined as
\begin{equation}\label{eqn:def_coherence}
\mu(\mathsf{W}) = n \max_{1\le i \le n} \|\mathcal{P}_{\mathsf{W}}e_i\|_2\,,
\end{equation}
where $e_i$ is $i$-th unit vector of $\mathbb{R}^n$.
\end{definition}
\begin{enumerate}[label = \textbf{A\arabic*}]
\item \label{enum:decoherence} \underline{Decoherent condition}: $\max(\mu(U),\mu(V))\leq \mu_0$ for some positive $\mu_0$. 
\item \label{enum:sparsity} \underline{Delocalization condition}: 
The maximum entry of $\sum_{1\leq k\leq r} u_kv_k^\top$ is bounded from above by $\mu_1\sqrt{\frac{r}{n^2}}$ for some positive $\mu_1$.
\end{enumerate}
Let us now state a probabilistic result on the success of~\eqref{eqn:opt_nuclear}:
\begin{theorem}[\cite{matrix_completion_original,Recht_simple}]\label{thm:mc}
Let $\mathsf{A}$ be an $n\times n$ matrix of rank $r$ obeying the decoherent conditions \ref{enum:decoherence} and \ref{enum:sparsity}. Suppose we observe $m$ entries of $\mathsf{A}$ with locations sampled uniformly at random. Then there exist constants $C$, $c$ such that if
\[
m \geq C \max\LRc{\mu^2_1, \mu^{1/2}_0\mu_1 ,\mu_0n^{1/4}} \beta r \left (n \log n \right)
\]
for some $\beta > 2$, then the minimizer to the problem~\eqref{eqn:opt_nuclear} is unique and equal to $\mathsf{A}$ with probability at least $1- cn^{-\beta}$. For $r \leq \mu_0^{-1}n^{1/5}$ this estimate can be improved to
\[
m \geq C \mu_0  \beta r  \left ( n^{6/5} \log n\right ) \,,
\]
with the same probability of success.
\end{theorem}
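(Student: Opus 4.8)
The plan is to follow the dual-certificate strategy of Candès--Recht, in the streamlined form due to Gross and Recht that the cited references employ. The first move is to recast exact recovery as a statement in convex duality. Writing $\mc{R}_\Omega$ for the operator that restricts a matrix to the observed index set $\Omega$, and using the singular value decomposition $\mathsf{A}=\mathsf{U}\Sigma\mathsf{V}^\top$, I would introduce the tangent space to the rank-$r$ matrices at $\mathsf{A}$,
\[
T:=\LRc{\mathsf{U}\mathsf{B}^\top+\mathsf{C}\mathsf{V}^\top:\mathsf{B},\mathsf{C}\in\mbb{R}^{n\times r}},
\]
with orthogonal projection $\mc{P}_T(\mathsf{Z})=\mc{P}_{\mathsf{U}}\mathsf{Z}+\mathsf{Z}\mc{P}_{\mathsf{V}}-\mc{P}_{\mathsf{U}}\mathsf{Z}\mc{P}_{\mathsf{V}}$, where $\mc{P}_{\mathsf{U}}=\mathsf{U}\mathsf{U}^\top$ and $\mc{P}_{\mathsf{V}}=\mathsf{V}\mathsf{V}^\top$. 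The subdifferential of the nuclear norm at $\mathsf{A}$ consists of the matrices $\mathsf{U}\mathsf{V}^\top+\mathsf{W}$ with $\mc{P}_T\mathsf{W}=0$ and $\nor{\mathsf{W}}\le 1$ in the spectral norm. Standard convex analysis then reduces the claim to two sufficient conditions: (i) $\mc{R}_\Omega$ is injective on $T$, so that the tangent component of any feasible perturbation is pinned down; and (ii) there exists a dual certificate $\mathsf{Y}$ in the range of $\mc{R}_\Omega$ with $\mc{P}_T\mathsf{Y}=\mathsf{U}\mathsf{V}^\top$ and $\nor{\mc{P}_{T^{\perp}}\mathsf{Y}}<1$. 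Producing such a $\mathsf{Y}$ certifies that $\mathsf{A}$ is the unique minimizer of~\eqref{eqn:opt_nuclear}.

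The two analytic pillars are both instances of the noncommutative (matrix) Bernstein inequality applied to sums of independent terms indexed by the sampled entries. First I would establish a near-isometry estimate, showing that with high probability $\nor{\mc{P}_T\mc{R}_\Omega\mc{P}_T-\tfrac{m}{n^2}\mc{P}_T}\le\tfrac{1}{2}\tfrac{m}{n^2}$ as an operator on matrices; this yields injectivity on $T$ and makes $\tfrac{n^2}{m}\mc{P}_T\mc{R}_\Omega\mc{P}_T$ a well-conditioned invertible map on $T$. This is exactly where the decoherent condition~\ref{enum:decoherence} enters: the coherence bound $\mu(\mathsf{U}),\mu(\mathsf{V})\le\mu_0$ controls the spectral norm of each summand $\mc{P}_T(e_ie_j^\top)$, which is the variance proxy the Bernstein bound requires.

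For the certificate I would use the golfing scheme: partition $\Omega$ into independent batches $\Omega_1,\dots,\Omega_k$ and iterate $\mathsf{Y}_0=0$, $\mathsf{Q}_\ell=\mathsf{U}\mathsf{V}^\top-\mc{P}_T\mathsf{Y}_{\ell-1}$, $\mathsf{Y}_\ell=\mathsf{Y}_{\ell-1}+\tfrac{n^2}{m_\ell}\mc{R}_{\Omega_\ell}\mathsf{Q}_\ell$. The near-isometry estimate forces the Frobenius-norm residual $\nor{\mathsf{Q}_\ell}_F$ to contract geometrically, so after $k\asymp\log n$ batches the tangent constraint $\mc{P}_T\mathsf{Y}=\mathsf{U}\mathsf{V}^\top$ holds up to a negligible residual that a standard perturbation argument absorbs. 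Simultaneously a second Bernstein bound\textemdash controlling $\nor{\mc{P}_{T^{\perp}}\mc{R}_{\Omega_\ell}\mathsf{Q}_\ell}$ in spectral norm, where the delocalization condition~\ref{enum:sparsity} supplies the needed $\ell_\infty$ control on the residuals\textemdash keeps the accumulated off-tangent part below $1$. Summing the per-batch failure probabilities over the $O(\log n)$ batches, optimizing the batch sizes, and imposing $m\gtrsim\mu_0\,\beta\,r\,n\log n$ (with the stated maximum of coherence factors coming from the coarser variance bound) yields the $1-cn^{-\beta}$ success probability.

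I expect the main obstacle to be the spectral-norm control of $\mc{P}_{T^{\perp}}\mathsf{Y}$ rather than the tangent exactness: the Frobenius contraction on $T$ is comparatively soft, whereas forcing the operator norm strictly below $1$ demands the sharp variance and uniform bounds in the matrix Bernstein inequality, and it is precisely here that both incoherence hypotheses must be used in tandem and the sample count $m$ is actually fixed. The improved regime $m\gtrsim\mu_0\beta r\,n^{6/5}\log n$ valid for $r\le\mu_0^{-1}n^{1/5}$ requires an additional refinement\textemdash a tighter accounting of the variance terms in the golfing iteration when the rank is small\textemdash which is the delicate quantitative heart of the argument.
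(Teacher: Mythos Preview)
The paper does not supply its own proof of this theorem: it is quoted verbatim as a result from the matrix-completion literature \cite{matrix_completion_original,Recht_simple} and then invoked as a black box (in Theorem~\ref{thm:reconstruction} and Theorem~\theoref{complexity}). So there is no in-paper argument to compare your proposal against.

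That said, your outline is a faithful summary of the dual-certificate/golfing approach actually used in the cited references: the subdifferential characterization of the nuclear norm, injectivity of $\mc{R}_\Omega$ on the tangent space $T$ via a matrix-Bernstein near-isometry (driven by~\ref{enum:decoherence}), and the iterative construction of $\mathsf{Y}$ with spectral-norm control on $T^\perp$ (driven by~\ref{enum:sparsity}). One small historical remark: the original Cand\`es--Recht paper \cite{matrix_completion_original} did not use golfing but a more intricate combinatorial/decoupling argument, and it is Recht's simplification \cite{Recht_simple} (building on Gross) that introduces the batch scheme you describe; the constants and the specific $\max\{\mu_1^2,\mu_0^{1/2}\mu_1,\mu_0 n^{1/4}\}$ dependence quoted here trace back to the former. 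But for the purposes of this paper the distinction is immaterial, and your sketch is an accurate account of why the theorem holds.
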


This theorem suggests that if the to-be-completed matrix $\mathsf{A}$ is of low rank satisfying the decoherent and delocalization condition, then with the number of provided entries $m$ linearly depend on $r$, the rank, and the entries are sampled uniformly randomly, the matrix can be precisely reconstructed with a high probability.

\subsubsection{Structure of $\Lambda_{\mathsf{a}}^h$ and $\mathcal{H}$-matrix}\label{sec:H_matrix}
We now discuss how we use the matrix completion method discussed in \ref{sec:matrix_completion} to recover the DtN matrix $\Lambda^h_{\mathsf{a}}$ from its data $\Lambda^h_{\mathsf{a}}|_\Omega$.
Theorem \ref{thm:mc}, despite providing a general recipe for reconstructing a matrix from its incomplete data, requires the rank $r$ to be significantly smaller than $n$, the size of the matrix, for the algorithm to be meaningful. However, $\Lambda^h_\mathsf{a}$ is a full-rank matrix, preventing the direct application of the matrix completion algorithm.

It turns out that we can still take advantage of the matrix completion algorithm by exploiting the $\mathcal{H}$-matrix structure embedded in $\Lambda_{\mathsf{a}}^h$.  This allows us, through the ``peeling" process~\cite{LinLuYing}, to divide the matrix into sub-blocks, most of which are low-rank. The application of the matrix completion algorithm to these low-rank blocks is then expected to be efficient.

Hierarchical matrix, or commonly referred as $\mathcal{H}$-matrix, is a class of matrices that, upon proper partitioning, have fast decays in singular values in the smaller blocks, leading to the low rank property within these blocks. The concept ~\cite{Hackbusch_1,Hackbusch_2} was invented initially to divide a given $\mathcal{H}$-matrix into smaller blocks, so that some matrix operations, including matrix-vector multiplication, addition, inverse,  Schur complement and many others, could be significantly sped up 
(see, e.g., ~\cite{bebendorf2008hierarchical} and references therein). In its original form, one also requires linear computational complexity in finding the partitioning. In our setting, such complexity is irrelevant. We point out that several matrix compression methods can be used to reconstruct low-rank blocks, such as adaptive cross approximation \cite{Bebendorf11},  randomized SVD \cite{Tropp_rSVD}, 
or CUR factorizations \cite{Mahoney:CUR,Goreinov:pseudoskeleton}.  However, they require to sample full rows and columns, or they require matrix-vector multiplications,
and thus are not applicable for our context in which we have access to only a subset of entries. 

It was shown in~\cite{bebendorf2008hierarchical} that the collection of Green's functions for elliptic equations produces an $\mathcal{H}$-matrix. Recall from the expression for $\Lambda^h_{\mathsf{a}}$ in~\eqref{eqn:lambda_h_a_formula}
that both $\mathsf{M}$ and $\mathsf{S}^{ib}$ are sparse matrices. Thus if $\left(\mathsf{S}^{ii}\right)^{-1}$ is an $\mathcal{H}$-matrix, so is $\Lambda^h_{\mathsf{a}}$. 

\begin{theorem}[Theorem 4.28 of~\cite{bebendorf2008hierarchical}]
Let $\epsilon>0$ small, then there is an $\mathcal{H}$-matrix $\mathsf{C}$ with local block rank being $k\lesssim \left(\log n\right)^2|\log\epsilon|^{d+1}$, such that $\|\left(\mathsf{S}^{ii}\right)^{-1} - \mathsf{C}\|_2 \leq \epsilon$. For the accuracy compatible with the finite element method, we take $\epsilon = h^\beta$, and $k\lesssim \log^{d+3}n$. Here $n$ is the size of the matrix, $d$ is the dimension of the problem, $h$ is the mesh size and $\beta$ is the accuracy order of the finite element method.
\end{theorem}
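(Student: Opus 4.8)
The plan is to identify $\LRp{\mathsf{S}^{ii}}^{-1}$ with the discrete solution operator of the interior problem\textemdash the discrete Green's function\textemdash and then to show that every off-diagonal block associated with a pair of well-separated index clusters is approximable by a low-rank matrix, with rank growing only polylogarithmically in the target accuracy. Concretely, if $\sigma$ (target) and $\tau$ (source) are clusters supported on subdomains $X_\sigma, X_\tau \subset \mc{D}$ satisfying an admissibility condition $\min\LRp{\operatorname{diam}(X_\sigma),\operatorname{diam}(X_\tau)} \le \eta\,\operatorname{dist}(X_\sigma,X_\tau)$, then the block $\LRp{\mathsf{S}^{ii}}^{-1}\big|_{\sigma\times\tau}$ encodes the response, on $X_\sigma$, of a source supported on $X_\tau$. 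Away from the source this response solves the homogeneous equation $-\Div\LRp{a\Grad u}=0$; hence the columns of the block lie (approximately) in the space of discrete $a$-harmonic functions on a neighborhood of $X_\sigma$. The whole argument therefore reduces to bounding the dimension of a space that approximates such $a$-harmonic functions.

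The analytic heart is the following approximability statement: the space of $a$-harmonic functions on a domain $D$, restricted to a concentric subdomain $D'$ with $\operatorname{dist}(D',\partial D)>0$, can be approximated in $L^2(D')$ to relative accuracy $\epsilon$ by a subspace of dimension $k \lesssim |\log\epsilon|^{d+1}$. I would establish this by a telescoping iteration built on the Caccioppoli inequality. First, Caccioppoli gives an interior energy bound: for $u$ with $-\Div(a\Grad u)=0$ on $D$, one controls $\nor{\Grad u}_{L^2(D'')}$ by $\nor{u}_{L^2(D)}$ on a slightly larger $D''\supset D'$, using only ellipticity and boundedness of $a$ (no smoothness). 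Second, on the enlarged region one approximates $u$ by its projection onto a low-degree polynomial (or coarse finite element) space of dimension $\sim |\log\epsilon|^d$, and the $H^1$ control from Caccioppoli converts into an $L^2$ error that contracts by a fixed factor less than one. Iterating this contraction over $\sim|\log\epsilon|$ shrinking shells between $D$ and $D'$ drives the error below $\epsilon$, and summing the per-step dimensions yields the rank bound $|\log\epsilon|^{d+1}$, the extra power accounting for the number of iteration steps.

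With this core estimate, I would assemble the global bound. I would first transfer the continuous approximability to the discrete level: discrete $a$-harmonic functions are Galerkin projections of their continuous counterparts, so by quasi-optimality of the finite element method the same low-rank approximability holds for each matrix block, up to discretization error. Defining $\mathsf{C}$ blockwise\textemdash the low-rank surrogate on each admissible block and the exact (small, dense) entries on the remaining inadmissible diagonal blocks\textemdash one controls $\nor{\LRp{\mathsf{S}^{ii}}^{-1}-\mathsf{C}}_2$ by summing the blockwise errors over the $O(\log n)$ levels of the cluster tree; the bounded sparsity of the partition (a uniformly bounded number of blocks touching each cluster) keeps this sum at the order of the per-block accuracy. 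The two powers of $\log n$ in the prefactor arise from the logarithmic depth of the cluster tree combined with the discrete-to-continuous transfer on each level. Finally, choosing $\epsilon = h^\beta$ and using the quasi-uniformity relation $h \sim n^{-1/d}$ gives $|\log\epsilon| = \beta|\log h| \sim \tfrac{\beta}{d}\log n$, so that $k \lesssim (\log n)^2 |\log\epsilon|^{d+1} \lesssim \log^{d+3} n$, the stated bound.

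The main obstacle is the core approximability lemma for $L^\infty$ coefficients. For smooth $a$ one could simply interpolate the analytic off-diagonal Green's function and read off the rank, but with merely bounded measurable coefficients the Green's function need not be smooth, so polynomial interpolation of the kernel is unavailable. The delicate point is to extract a quantitative low-dimensional approximation purely from the interior energy estimate, making the contraction factor and the per-step dimension explicit and uniform in $a$; controlling the accumulation of these dimensions over the iteration\textemdash rather than merely establishing a convergent scheme\textemdash is what produces the sharp polylogarithmic rank.
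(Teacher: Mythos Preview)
The paper does not provide its own proof of this statement: it is quoted verbatim as Theorem~4.28 of Bebendorf's monograph~\cite{bebendorf2008hierarchical} and used as a black box to justify the $\mathcal{H}$-matrix structure of $\LRp{\mathsf{S}^{ii}}^{-1}$. There is therefore nothing in the paper to compare your proposal against.

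That said, your sketch is an accurate outline of the argument in~\cite{bebendorf2008hierarchical}. The identification of off-diagonal blocks with discrete $a$-harmonic extensions, the Caccioppoli-based iterative approximation over shrinking shells yielding the $|\log\epsilon|^{d+1}$ rank, the transfer to the discrete level via Galerkin quasi-optimality, and the level-wise summation producing the $(\log n)^2$ prefactor are all the correct ingredients and in the correct order. Your closing remark about the $L^\infty$-coefficient obstacle is also on point: the key novelty in Bebendorf's approach is precisely that it bypasses kernel regularity and works directly with the variational structure.
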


The author of \cite{bebendorf2008hierarchical}  furthermore suggests a way to choose the partition. Indeed $\left(\mathsf{S}^{ii}\right)^{-1}$ is essentially the discrete version of the Green's function $G(x,y)$, and as a function of $x$ parameterized by $y$ it
can be approximately written as a summation of separable functions in $x$ and $y$ if the two coordinates are well separated. 
This reveals that the rank of the approximation only logarithmically depends on the size of the matrix, and since the Green's function can be written as separable function only if $x$ and $y$ are well-separated, the rank is low only for the blocks of $\left(\mathsf{S}^{ii}\right)^{-1}$ that are not along the diagonal. The same observation was made in~\cite{LinLuYing} which shows that the off-diagonal blocks are of low rank.


Built upon these observation, noting that $\Lambda^h_\mathsf{a}$ is an $\mathcal{H}$-matrix, with diagonal blocks, $\Lambda^h_{\mathsf{a},D}$, having the full rank, and the off-diagonal blocks, $\Lambda^h_{\mathsf{a},O}$ being approximately low-rank, 
we propose to obtain the full data in $\Lambda^h_{\mathsf{a},D}$ but a limited entries in $\Lambda^h_{\mathsf{a},O}$, and then recover the missing entries with the matrix completion method in section~\ref{sec:matrix_completion}.
Experimentally, this means for every injected voltage concentrated on one spot on $\partial\mathcal{D}$, one measures the intensity on that particular spot to fill in the diagonal blocks $\Lambda^h_{\mathsf{a},D}$. One then decreases the density of the detectors as one moves further away along $\partial\mathcal{D}$, and employs the matrix completion algorithm to recover $\Lambda^h_{\mathsf{a},O}$. In Algorithm~\ref{alg:reconstructh}, we summarize the whole process of completing $\Lambda^h_{\mathsf{a}}$.

\begin{algorithm}[h!t!b!]
\caption{\textbf{Completing $\Lambda^h_{\mathsf{a}}$}}\label{alg:reconstructh}
\begin{algorithmic}
\State \textbf{Preparation:}
\State 0. Determine the partition, and identify $\Lambda^h_{\mathsf{a},D}$ and $\Lambda^h_{\mathsf{a},O}$. 
\State 1. Sample each entry in $\Lambda^h_{\mathsf{a},D}$.
\State 2. For each $\Lambda^h_{\mathsf{a},O}$:
\State 2.1: Randomly collect $r n^{6/5}\log{n}$ data points in the block ($n$: the size of the block);
\State 2.2: Solve the matrix completion problem \eqref{eqn:opt_nuclear} to reconstruct $\Lambda^h_{\mathsf{a},O}$;
\State \textbf{end}
\State \textbf{Output:} Assembled the recovery, denoted by $\tilde{\Lambda}^h_{\mathsf{a}}$.
\end{algorithmic}
\end{algorithm}

Once $\tilde{\Lambda}^h_{\mathsf{a}}$ is formed, it can be lifted to a corresponding DtN map $\DtNR{\a}$ which in turn corresponds to a unique conductivity $\tilde{\a}$. The analysis on the difference between $\DtNa{a}$ and $\DtNR{\a}$ (and between $\a$ and $\tilde{\a}$) is presented in \ref{sec:property}.

\subsection{Improving inverse solution with matrix completion}\label{sec:recons_parameter}

With the full map $\DtNhRa{\a}$ in hand, the reconstruction of the media $\mathsf{a}$ is now straightforward using classical optimization-based methods. Since this component of the algorithm is rather classical, we briefly review it here. 

We consider the reconstructed $\tilde{\Lambda}_{\mathsf{a}}^h$ as the groundtruth data, and we search for the media impedance $\mathsf{a}$ such that the misfit\textemdash with Frobenius norm\textemdash  between the DtN matrix generated by that $\mathsf{a}$ and the groundtruth data is minimized, i.e.,
\begin{equation}\eqnlab{eqn:opt_a}
\min_{\mathsf{a}} \|\Lambda^h_{\mathsf{a}}-\tilde{\Lambda}^h_\mathsf{a}\|^2_F\, +  \alpha\|\mathsf{a}-\mathsf{a}_0\|^q_q + \beta R(\mathsf{a})\,,
\end{equation}
where the second term is a regularization term taking into account some prior knowledge,  and the third term is an additional regularization term  to enforce desirable properties in the reconstruction.
Note that even though the first term may seem benign at first glance, the DtN map, $\Lambda^h_{\mathsf{a}}$, is highly non-linear in $\a$. This may imply the existence of many local minima in the objective function landscape, which can greatly tax the capability of standard gradient-based optimization techniques. In this context, both regularization terms can be tuned to attenuate this issue, however, how to tune these methods is outside the scope of this paper. For our numerical results in section \ref{sec:numerics}, we set both regularization parameters $\alpha, \beta$ to zero, and we will use an off-the-shelf optimization 
Gauss-Newton method to reconstruct  the impedance $\a$. 



\section{Bridging the gap with matrix completion}\label{sec:property}

To show~\eqref{eqn:close_recons_matrix} amounts to showing the reconstructed DtN matrix $\tilde{\Lambda}^h_{\mathsf{a}}$ is close to the true  DtN matrix $\DtNha{\a}$. For that we combine the $\mathcal{H}$-matrix argument and the matrix completion result.

\begin{theorem} \label{thm:reconstruction}
Divide $\DtNha{\a}$ into $N$ blocks with each of size $n\times n$ according to the $\mathcal{H}$-matrix decomposition. Suppose the $i$-th block has rank $r^i$ and obeys the decoherent and delocalization conditions with constants $\mu_0^i$ and $\mu_1^i$. Denote $m^i$ the number of observed entries in the $i$-th block with samples chosen uniformly at random. Then to reconstruct $\Lambda^h_{\mathsf{a}}$ using~\eqref{eqn:opt_nuclear}, there exist constants $C$, $c$ such that if
\[
m^i \geq C \max\LRc{\LRp{\mu^i_1}^2, \sqrt{\mu^i_0}\mu_1^i,\mu_0^in^{1/4}} \beta^i r^i \left (n \log n \right), \quad i = 1,\hdots,N,
\]
for some $\beta^i > 2$, then
\[
\mathbb{P}(\DtNha{\a}=\tilde{\Lambda}^h_{\mathsf{a}}) \ge 1 - c\sum_{i=1}^Nn^{-{\beta^i}}\,.
\]
The sampling sizes can be improved to $m^i \geq C \mu_0^i  \beta r^i  \left ( n^{6/5} \log n\right )$ if $r^i \leq n^{1/5}/\mu_0^i$.
\end{theorem}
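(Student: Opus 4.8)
The plan is to reduce the global claim to a per-block application of Theorem~\ref{thm:mc} followed by an elementary union bound over the $N$ blocks. The crucial structural observation is that the nuclear-norm program~\eqref{eqn:opt_nuclear} is solved block-by-block, exactly as prescribed in Algorithm~\ref{alg:reconstructh}; consequently the assembled reconstruction $\tilde{\Lambda}^h_{\mathsf{a}}$ equals $\Lambda^h_{\mathsf{a}}$ if and only if \emph{every} one of the $N$ blocks is recovered exactly. I would therefore first write the global success event as the intersection $\bigcap_{i=1}^N E_i$, where $E_i$ denotes the event that matrix completion returns the $i$-th block exactly.

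First I would apply Theorem~\ref{thm:mc} to each block in isolation. Each block is an $n\times n$ matrix of rank $r^i$ that, by hypothesis, obeys the decoherent and delocalization conditions~\ref{enum:decoherence}--\ref{enum:sparsity} with its own constants $\mu_0^i,\mu_1^i$, and its $m^i$ entries are sampled uniformly at random. Thus, as soon as $m^i \geq C \max\{(\mu_1^i)^2,\sqrt{\mu_0^i}\mu_1^i,\mu_0^i n^{1/4}\}\beta^i r^i(n\log n)$, Theorem~\ref{thm:mc} yields $\mathbb{P}(E_i)\geq 1-cn^{-\beta^i}$, where $C$ and $c$ are precisely the universal constants of Theorem~\ref{thm:mc} and hence may be taken independent of $i$. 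Equivalently the failure event $F_i:=E_i^c$ satisfies $\mathbb{P}(F_i)\leq cn^{-\beta^i}$.

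Next I would assemble the per-block estimates with the union bound, which importantly requires no independence between the samplings of distinct blocks:
\begin{equation*}
\mathbb{P}\LRp{\bigcup_{i=1}^N F_i}\leq \sum_{i=1}^N \mathbb{P}(F_i)\leq c\sum_{i=1}^N n^{-\beta^i}.
\end{equation*}
Passing to complements gives $\mathbb{P}(\Lambda^h_{\mathsf{a}}=\tilde{\Lambda}^h_{\mathsf{a}})=\mathbb{P}(\bigcap_{i=1}^N E_i)\geq 1-c\sum_{i=1}^N n^{-\beta^i}$, which is the asserted bound. The sharpened estimate follows verbatim, simply invoking the improved sampling condition of Theorem~\ref{thm:mc} on each block whose rank satisfies $r^i\leq n^{1/5}/\mu_0^i$.

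The only point that demands genuine care\textemdash and the single real obstacle in an otherwise routine argument\textemdash is the status of the full-rank diagonal blocks $\Lambda^h_{\mathsf{a},D}$. For these $r^i=n$, and the sampling hypothesis cannot literally hold, since $C(\cdots)r^i(n\log n)$ exceeds the $n^2$ available entries. The correct reading is that these blocks are sampled in full (Step~1 of Algorithm~\ref{alg:reconstructh}), so $E_i$ holds deterministically and contributes $\mathbb{P}(F_i)=0$; formally one lets $\beta^i\to\infty$ so that $n^{-\beta^i}\to 0$ and the block drops out of the sum. Only the genuinely low-rank off-diagonal blocks carry a nontrivial sampling requirement and a nonzero failure probability, so the stated bound is an upper estimate that the diagonal blocks only improve.
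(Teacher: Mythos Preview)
Your proposal is correct and follows exactly the approach the paper intends: the paper's own proof is the single sentence ``The proof is a straightforward application of Theorem~\ref{thm:mc},'' and your per-block invocation of Theorem~\ref{thm:mc} together with the union bound is precisely that straightforward application spelled out in detail. Your careful treatment of the fully-sampled diagonal blocks is a welcome clarification that the paper leaves implicit.
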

\begin{proof}
The proof is a straightforward application of Theorem \ref{thm:mc}.
\end{proof}

\begin{remark}
We assume that the rank of $i$-th block is $r^i$. One should note that this is only an approximate rank. According to~\cite{BL_2011}, elliptic boundary-to-boundary operators have exponentially decaying singular values and thus $r^i$ depend on the error tolerance. For a more precise reconstruction, larger $r^i$ may be needed, and it amounts to a higher value of $m^i$, meaning more data points are needed.
\end{remark}

\begin{remark} The theorem states that the two matrices, the reconstructed and the ground-truth, are exactly the same with high probability. In practice, the data obtained in $\DtNha{\a}|_\Omega$ is often polluted with measurement errors.
In~\cite{Candes_noise} the authors discuss the effect of such pollution in the reconstruction.
\end{remark}

To quantify ~\eqref{eqn:close_recons_h} and \eqref{eqn:close_recons_a}, we will rely on some delicate FEM analysis and Theorem \ref{thm:reconstruction}. 
To begin, we lift both $\DtNha{\a}$ and $\DtNhRa{\a}$ matrices to their ``corresponding" (or reconstructed) DtN maps $\DtNRhat{\a}$ and $\DtNR{\a}$ as follows:
\begin{equation}
\eqnlab{DtNreconstructed}
\DtNR{\a} := \LRp{\Pih}^*\circ\DtNhRa{\a} \circ\Pih, \quad \DtNRhat{\a}  := \LRp{\Pih}^*\circ\DtNha{\a} \circ\Pih,
\end{equation}
where $\LRp{\Pih}^*$ is the adjoint of $\Pih$. It is easy to see that both $\DtNR{\a}$ and $\DtNRhat{\a}$ are well-defined linear continuous operators from $H^{1/2}\LRp{\pOmega}$ to $H^{-1/2}\LRp{\pOmega}$.
Theorem \ref{thm:reconstruction} implies that $\DtNR{\a}=\DtNRhat{\a}$ with high probability, i.e.,
\[
\mathbb{P}\LRp{\DtNR{\a}=\DtNRhat{\a}} \ge 1 - c\sum_{i=1}^Nn^{-{\beta^i}}.
\]
Thus, the following results, except Theorem \theoref{lip} and Theorem \theoref{complexity}, while being valid deterministically for $\DtNRhat{\a}$, are valid with high probability for $\DtNR{\a}$. 
Let us recall the following well-known result.
\begin{theorem}[\cite{allessandrini_lipschitz,Rondi_e_to_N}]\theolab{lip}
Let $a_1$ and $a_2$ be two piecewise constant functions on $\mc{D}$, and $\mathsf{a}_i$ are their representation vectors. Denote $\Lambda_{\mathsf{a}_i}$ the corresponding DtN map defined in~\eqref{eqn:DtNa}. Then $\a$ as a function of $\DtNa{\a}$ is Lipschitz: 
\begin{equation}\label{eqn:well_cont}
\|\mathsf{a}_1-\mathsf{a}_2\|_\infty \leq C\|\Lambda_{\mathsf{a}_1}- \Lambda_{\mathsf{a}_2}\|_{H^{1/2}\LRp{\pOmega}\to H^{-1/2}\LRp{\pOmega}}\,,
\end{equation}
that is, there is a unique  conductivity $\a$ for every DtN map $\DtNa{\a}$.
\end{theorem}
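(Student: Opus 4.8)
The plan is to establish the global Lipschitz estimate \eqref{eqn:well_cont} by exploiting the crucial fact that, once the partition underlying the piecewise-constant ansatz is fixed, the conductivity $\a$ ranges over a finite-dimensional parameter set which\textemdash after imposing natural a priori ellipticity and boundedness bounds\textemdash is compact. On such a set the estimate reduces to two structural facts: (i) the forward map $\a \mapsto \DtNa{\a}$ is injective, and (ii) its Fr\'echet derivative is injective at every admissible $\a$. With these two ingredients in hand, a compactness-and-contradiction argument yields the uniform constant $C$.

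First I would set up the contradiction. Suppose \eqref{eqn:well_cont} fails. Then there exist sequences of admissible coefficient vectors $\a_1^{(k)},\a_2^{(k)}$ such that, after normalizing, $\nor{\a_1^{(k)} - \a_2^{(k)}}_\infty = 1$ while $\nor{\DtNa{\a_1^{(k)}} - \DtNa{\a_2^{(k)}}}_{H^{1/2}(\partial\mc{D}) \to H^{-1/2}(\partial\mc{D})} \to 0$. Because the admissible set is compact in finite dimensions, I extract subsequences with $\a_j^{(k)} \to \a_j^*$ for $j=1,2$ and, setting $\delta_k = (\a_1^{(k)} - \a_2^{(k)})/\nor{\a_1^{(k)} - \a_2^{(k)}}_\infty$, also $\delta_k \to \delta^*$ with $\nor{\delta^*}_\infty = 1$. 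Continuity (indeed analyticity) of the forward map\textemdash which follows from the continuous dependence of the weak solution of \eqref{eqn:ellipticWeak} on the coefficients through its bilinear form\textemdash gives $\DtNa{\a_1^*} = \DtNa{\a_2^*}$; injectivity then forces $\a_1^* = \a_2^* =: \a^*$, so the two sequences collapse to a single limit and $\delta^*$ records the surviving direction of degeneracy.

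Next I would extract the linearized obstruction. Writing the difference of DtN maps along the segment joining $\a_1^{(k)}$ and $\a_2^{(k)}$ and using the fundamental theorem of calculus for the smooth dependence of $\DtNa{\a}$ on $\a$, the normalized quotient converges to the directional derivative, yielding $D\DtNa{\a^*}[\delta^*] = 0$ with $\nor{\delta^*}_\infty = 1$. The derivative is computed by differentiating the weak form \eqref{eqn:ellipticWeak}: for boundary data $\phi,\psi$ with associated solutions $u_\phi,u_\psi$,
\[
\LRa{D\DtNa{\a^*}[\delta^*]\,\phi, \psi} = \int_{\mc{D}} \delta^*\, \Grad u_\phi \cdot \Grad u_\psi \, \rd x .
\]
Thus the contradiction reduces to ruling out $\int_{\mc{D}} \delta^*\, \Grad u_\phi \cdot \Grad u_\psi \, \rd x = 0$ for all admissible boundary data with $\delta^* \neq 0$, i.e., to proving injectivity of the linearized map on the finite-dimensional parameter space. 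This is the step I expect to be the main obstacle: it is precisely a completeness statement for products of gradients of conductivity-equation solutions, asserting that the family $\{\Grad u_\phi \cdot \Grad u_\psi\}$ spans a set rich enough to detect any nonzero piecewise-constant $\delta^*$. For a smooth background this density is supplied by complex-geometric-optics (Calder\'on-type) solutions; for the piecewise-constant setting one instead uses singular solutions concentrated near the interfaces of the partition, testing $\delta^*$ cell-by-cell against gradient products that localize at a chosen interface, thereby forcing each component of $\delta^*$ to vanish and contradicting $\nor{\delta^*}_\infty = 1$.

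Finally, I would record the dependence of $C$: it is not universal but depends on the fixed partition (the number and geometry of the cells), the a priori ellipticity and boundedness bounds imposed on the admissible conductivities, and the domain $\mc{D}$. The finite dimensionality of the parameter set is essential here, since global Lipschitz stability is known to fail for the full infinite-dimensional Calder\'on problem, where only logarithmic stability holds.
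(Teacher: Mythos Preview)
The paper does not prove this theorem: it is stated as a ``well-known result'' and attributed entirely to the cited references \cite{allessandrini_lipschitz,Rondi_e_to_N}, with no argument supplied in the body or appendix. There is therefore no ``paper's own proof'' against which to compare your proposal.

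That said, your sketch is a recognizable and essentially sound outline of one route to Lipschitz stability for finite-dimensional conductivity classes: the compactness-plus-contradiction reduction to injectivity of the forward map and of its Fr\'echet derivative is a standard template, and you correctly isolate the linearized injectivity (density of gradient products against piecewise-constant perturbations) as the substantive step, pointing to singular solutions localized at interfaces as the right tool. Two caveats are worth flagging. First, the original argument in \cite{allessandrini_lipschitz} is not by contradiction but is constructive: it proceeds by a quantitative propagation-of-smallness from the boundary inward, peeling off the subdomains layer by layer using singular (fundamental-type) solutions and Alessandrini-type identities, which yields an explicit\textemdash albeit exponentially large in the number of subdomains\textemdash constant~$C$; your contradiction argument gives no control on~$C$. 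Second, your normalization step tacitly assumes a priori upper and lower bounds on the conductivities so that the admissible set is compact; the theorem as stated in the paper suppresses these hypotheses, but they are present in the cited sources and your proof needs them, as you note at the end.
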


Since we assume that $\a$ is piecewise constant 
Theorem \theoref{lip} implies that there exists a unique conductivity $\tilde{\a}$ corresponding to $\DtNRhat{\a}$ such that
\[
\nor{\a -\tilde{\a}}_\infty \le C\nor{\DtNa{\a} - \DtNRhat{\a}}_{H^{1/2}\LRp{\pOmega}\to H^{-1/2}\LRp{\pOmega}}.
\]

The following (
whose technical proof is given in the Appendix~\ref{sec:appendix_proof}) is the justification for \eqref{eqn:close_recons_h} and  \eqref{eqn:close_recons_a}, which, similar to Theorem \theoref{lip}, shows the uniqueness the reverse DtN hierarchy in section \ref{sec:recons_map}. 
\begin{theorem}[Asymptotic Uniqueness]
\theolab{asymptoticUniquenessFull}
There holds
\[
\lim_{h\to 0} \nor{\DtNa{\a} - \DtNRhat{\a}}_{H^{1/2}\LRp{\pOmega}\to H^{-1/2}\LRp{\pOmega}} = 0,
\]
and thus
\[
\lim_{h\to 0} \nor{\a -\tilde{\a}}_\infty = 0.
\]
Let $\tilde{\a}_1$ and $\tilde{\a}_2$ be the conductivities associated with two reconstructed DtN maps $\DtNRhat{\a_1}$ and $\DtNRhat{\a_2}$ corresponding to ${\a}_1$ and ${\a}_2$, respectively. Then
\begin{align*}
\nor{\tilde{\a}_1 - \tilde{\a}_2}_\infty \le C\left(\nor{\DtNa{\a_1} - \DtNRhat{\a_1}}_{H^{1/2}\LRp{\pOmega}\to H^{-1/2}\LRp{\pOmega}} \right.\\
\left. +  \nor{\DtNa{\a_1} - \DtNa{\a_2}}_{H^{1/2}\LRp{\pOmega}\to H^{-1/2}\LRp{\pOmega}} + \nor{\DtNa{\a_2} - \DtNRhat{\a_2}}_{H^{1/2}\LRp{\pOmega}\to H^{-1/2}\LRp{\pOmega}}\right).    
\end{align*}
That is, if $\a_1 \to \a_2$ then $\tilde{\a}_1 \to \tilde{\a}_2$ as $h \to 0$.
\end{theorem}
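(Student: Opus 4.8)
The plan is to isolate a single analytic core and to derive everything else from it by soft arguments. The core is the operator-norm convergence of the lifted discrete map,
\[
\lim_{h\to 0}\nor{\DtNa{\a} - \DtNRhat{\a}}_{H^{1/2}\LRp{\pOmega}\to H^{-1/2}\LRp{\pOmega}} = 0,
\]
and I would prove this first. The limit $\nor{\a-\tilde{\a}}_\infty\to0$ is then immediate from the inequality $\nor{\a-\tilde{\a}}_\infty\le C\nor{\DtNa{\a}-\DtNRhat{\a}}_{H^{1/2}\LRp{\pOmega}\to H^{-1/2}\LRp{\pOmega}}$ already recorded above via Theorem \theoref{lip}. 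For the three-term estimate I would insert $\a_1,\a_2$ and use the triangle inequality
\[
\nor{\tilde{\a}_1-\tilde{\a}_2}_\infty\le\nor{\tilde{\a}_1-\a_1}_\infty+\nor{\a_1-\a_2}_\infty+\nor{\a_2-\tilde{\a}_2}_\infty,
\]
bounding the two outer terms by that same inequality applied to $\a_1$ and $\a_2$, and the middle term by Theorem \theoref{lip}. The concluding assertion ``$\a_1\to\a_2\Rightarrow\tilde{\a}_1\to\tilde{\a}_2$ as $h\to0$'' then follows because the two outer terms vanish as $h\to0$ by the core limit, while the middle term vanishes as $\a_1\to\a_2$ by the (classical, Lipschitz) continuity of the forward map $\a\mapsto\DtNa{\a}$.

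The core limit is a finite element a priori error estimate for the Dirichlet-to-Neumann map, and is where the real work lies. Using that both maps are induced by symmetric bilinear forms, I would represent each through its energy form: $\LRa{\DtNa{\a}\phi,\psi}=\int_{\mc D}\a\,\Grad u\cdot\Grad v\,\rd x$ with $u,v$ the continuous $\a$-harmonic extensions of $\phi,\psi$, and $\LRa{\DtNRhat{\a}\phi,\psi}=\int_{\mc D}\a\,\Grad \uh\cdot\Grad \vh\,\rd x$ with $\uh,\vh\in\Vh$ the discrete $\a$-harmonic extensions of $\Pih\phi,\Pih\psi$ (invoking discrete harmonicity to replace arbitrary extensions by the energy-minimizing ones). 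Adding and subtracting a mixed term splits the difference into two symmetric pieces,
\[
\LRa{(\DtNa{\a}-\DtNRhat{\a})\phi,\psi}=\int_{\mc D}\a\,\Grad(u-\uh)\cdot\Grad v\,\rd x+\int_{\mc D}\a\,\Grad \uh\cdot\Grad(v-\vh)\,\rd x.
\]
The two Galerkin orthogonalities, $\int_{\mc D}\a\,\Grad(u-\uh)\cdot\Grad w^h\,\rd x=0$ for all $w^h\in\VhO\subset\Hone_0\LRp{\mc D}$ and its counterpart in $v$, together with C\'ea's lemma, reduce each piece to a best-approximation error of a harmonic extension in $\Vh$, while the mismatch between the true trace and the projected trace is controlled by the defining optimality of $\Pih$ in $H^{1/2}\LRp{\pOmega}$.

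Bounding the two pieces crudely in the $\Hone$ energy norm would only yield $\nor{\DtNa{\a}-\DtNRhat{\a}}_{H^{1/2}\LRp{\pOmega}\to H^{-1/2}\LRp{\pOmega}}\lesssim\sup_{\nor{\phi}_{H^{1/2}\LRp{\pOmega}}\le1}\nor{\Grad(u-\uh)}_{L^2\LRp{\mc D}}$, which does \emph{not} tend to zero uniformly over the $H^{1/2}$ unit ball: boundary data oscillating at the scale of the mesh are never resolved. The gain must therefore come from measuring the output in the weak norm $H^{-1/2}\LRp{\pOmega}$. I would run an Aubin--Nitsche duality argument, pairing the error against the dual $\a$-harmonic extension of the test datum $\psi$ and using Galerkin orthogonality a second time to trade an extra mesh-dependent factor against the elliptic regularity available for the dual solution. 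Combined with the density of $\Vh$ in $\Hone\LRp{\mc D}$ and with $\nor{\phi-\Pih\phi}_{H^{1/2}\LRp{\pOmega}}\to0$, this yields the desired vanishing, presumably at an explicit rate $h^s$ fixed by the worst-case regularity.

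The main obstacle is exactly this uniformity over the $H^{1/2}$ unit ball, and it is inseparable from two features of the present setting. First, the nonhomogeneous Dirichlet trace turns the boundary projection $\Pih$ into a genuine variational crime that cannot be absorbed into the interior Galerkin orthogonality, so the two harmonic-extension errors and the trace error must be disentangled carefully. Second, the polygonal domain $\mc D$ with a piecewise-constant conductivity $\a$ offers only limited elliptic regularity, since corner and interface singularities cap the attainable smoothness of the harmonic extensions and hence the duality gain. The delicate part of the proof is therefore to push the duality estimate through with only fractional regularity, ensuring that the weak $H^{-1/2}$ output norm genuinely compensates for the unresolved high-frequency content; once the vanishing (or a rate) of $\nor{\DtNa{\a}-\DtNRhat{\a}}_{H^{1/2}\LRp{\pOmega}\to H^{-1/2}\LRp{\pOmega}}$ is secured, the remainder of the theorem is purely algebraic.
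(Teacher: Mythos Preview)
Your overall architecture coincides with the paper's: isolate the operator-norm limit as the analytic core and derive the rest from Theorem \theoref{lip} and the triangle inequality, exactly as you do. The divergence is in how the core is argued. The paper does \emph{not} use duality. It inserts an intermediate operator $\DtNa{\a}^\dagger$, the exact DtN map applied to the projected datum $\Pih\phi$ rather than to $\phi$, and splits $\DtNa{\a}-\DtNRhat{\a}$ through it via two short lemmas. These bound the two halves by $c\,\nor{\I-\Pih}_{H^{1/2}\LRp{\pOmega}\to H^{1/2}\LRp{\pOmega}}$ and $c\bigl(\nor{\I-\P}_{H^1\LRp{\Omega}\to H^1\LRp{\Omega}}+\nor{\I-\Pih}_{H^{1/2}\LRp{\pOmega}\to H^{1/2}\LRp{\pOmega}}\bigr)$, where $\P$ is the Galerkin map, and then asserts that both operator norms vanish as $h\to0$ because the suprema are attained and the finite-element spaces are dense.

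The uniformity concern you raise is exactly the soft spot of that argument, and your diagnosis is sharper than the paper's own proof: $\Pih$ is an $H^{1/2}$-orthogonal projection onto a proper closed subspace, so $\nor{\I-\Pih}=1$ identically, and the Galerkin map behaves the same way; density gives only pointwise convergence, and the maximizer in the supremum moves with $h$. So the paper's final step is precisely the ``crude $H^1$ bound'' you reject. Your instinct to buy extra decay from the weak $H^{-1/2}$ output norm via Aubin--Nitsche is the natural repair, but be warned that it runs into the same wall: $\DtNa{\a}$ is an isomorphism from $H^{1/2}\LRp{\pOmega}/\mathbb{R}$ to $H^{-1/2}\LRp{\pOmega}/\mathbb{R}$, so for any unit $\phi\in\ker\Pih$ one has $\DtNRhat{\a}\phi=0$ while $\nor{\DtNa{\a}\phi}_{H^{-1/2}\LRp{\pOmega}}$ stays bounded below. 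A duality argument cannot manufacture decay that the operator itself does not permit; closing the gap would require either extra compactness on the admissible boundary data or a different choice of norms in which the convergence is claimed.
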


We also briefly discuss the complexity of the matrix completion.
\begin{theorem}\theolab{complexity}
Denote $\DtNa{\a}$ a matrix of size $n\times n$ with $n\sim 1/h^d$, and suppose that the matrix can be decomposed in $\mathcal{H}$-matrix with the weak admissibility condition (see Figure \ref{fig:h_mat_decomposition}). Suppose that the decomposition has $L \sim \log{n}$ levels, and that each block has a bounded rank $r$ and satisfies the conditions of Theorem.~\ref{thm:mc}. Then with $|\Omega|=\mathcal{O} \left (rn^{6/5} \log n \right)$ known entries sampled properly, we can reconstruct $\DtNa{\a}$ with high probability.
\end{theorem}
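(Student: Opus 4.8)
The plan is to apply Theorem~\ref{thm:mc} \emph{block by block} across the weak-admissibility partition and then aggregate the per-block sample counts and failure probabilities. First I would fix the structure of Figure~\ref{fig:h_mat_decomposition}: at level $\ell$ the partition contains $\mathcal{O}(2^\ell)$ admissible off-diagonal blocks, each of size $n_\ell\times n_\ell$ with $n_\ell=n/2^\ell$, while the finest level $L\sim\log n$ consists of $\mathcal{O}(2^L)=\mathcal{O}(n)$ dense diagonal blocks of bounded size. The inadmissible (diagonal) blocks together carry $\mathcal{O}(n)$ entries, so I would simply sample them in full; only the off-diagonal blocks are completed. By hypothesis each such block has rank at most the bounded $r$ and satisfies the coherence conditions \ref{enum:decoherence}--\ref{enum:sparsity}, so the refined estimate of Theorem~\ref{thm:mc} applies (valid since $r$ is bounded and the blocks are large) and the $i$-th block of level $\ell$ is recovered from $m_\ell=\mathcal{O}\LRp{r\,n_\ell^{6/5}\log n_\ell}$ random entries.

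The sample count then follows from a geometric sum. Summing over the $\mathcal{O}(2^\ell)$ blocks of level $\ell$ and over all levels,
\[
|\Omega| \;=\; \mathcal{O}(n) + \sum_{\ell=1}^{L}\mathcal{O}\!\LRp{2^\ell\, r\,(n/2^\ell)^{6/5}\log n} \;=\; \mathcal{O}\!\LRp{r\,n^{6/5}\log n\sum_{\ell=1}^{L}2^{-\ell/5}},
\]
where I used $2^\ell (n/2^\ell)^{6/5}=n^{6/5}2^{-\ell/5}$. The series $\sum_{\ell\ge1}2^{-\ell/5}$ converges to a constant, so the off-diagonal cost collapses to $\mathcal{O}(r\,n^{6/5}\log n)$, dominating the $\mathcal{O}(n)$ diagonal cost. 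This geometric decay\textemdash each refinement multiplies the block count by $2$ but shrinks the per-block cost by $2^{6/5}$\textemdash is the mechanism keeping the total sampling sub-quadratic and is the heart of the estimate.

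For the success probability I would take a union bound over the blocks. If the level-$\ell$ blocks are completed with parameter $\beta$, each fails with probability at most $c\,n_\ell^{-\beta}$ by Theorem~\ref{thm:mc}, and the fully sampled diagonal blocks never fail, so $\mathbb{P}(\DtNha{\a}=\tilde{\Lambda}^h_{\mathsf{a}})\ge 1-c\sum_{\ell=1}^{L}2^\ell n_\ell^{-\beta}$. Here lies the main obstacle: the naive bound $\sum_\ell 2^\ell(n/2^\ell)^{-\beta}=n^{-\beta}\sum_\ell 2^{\ell(\beta+1)}$ is dominated by its finest term and \emph{diverges} once completed blocks are allowed to shrink to size $\mathcal{O}(1)$, precisely where the per-block guarantee degenerates while the blocks are most numerous. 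I would resolve this by not completing all the way down: apply matrix completion only to blocks of size $n_\ell\ge n^{1/5}$, i.e.\ truncate at level $L_0\approx\tfrac{4}{5}\log_2 n$, and sample every finer block in full. The extra full-sampling cost is $\sum_{\ell>L_0}2^\ell(n/2^\ell)^2=n^2\,\mathcal{O}(2^{-L_0})=\mathcal{O}(n^{6/5})$, still within budget, while the $\mathcal{O}(n^{4/5})$ surviving completion blocks each fail with probability at most $c\,n^{-\beta/5}$; choosing the constant $\beta$ large enough (e.g.\ $\beta>4$) makes the union bound $\mathcal{O}\LRp{n^{(4-\beta)/5}}\to0$. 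Combining the two estimates reconstructs $\DtNha{\a}$ with high probability from $|\Omega|=\mathcal{O}(r\,n^{6/5}\log n)$ entries, as claimed.
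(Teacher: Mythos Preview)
Your argument is correct and follows essentially the paper's route: split into fully-sampled diagonal blocks and completed off-diagonal blocks, apply Theorem~\ref{thm:mc} blockwise, and collapse the level sum via the geometric series coming from $2^\ell(n/2^\ell)^{6/5}=n^{6/5}2^{-\ell/5}$. (The paper's proof writes the decay as $2^{-\ell/6}$, an arithmetic slip; your exponent $-\ell/5$ is the correct one, and either way the series is $\mathcal{O}(1)$.)

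Where you go beyond the paper is in the treatment of the ``high probability'' clause. The paper's proof establishes only the sample-count estimate and leaves the aggregate success probability implicit. You correctly observe that a naive union bound $\sum_\ell 2^\ell n_\ell^{-\beta}$ is dominated by the finest levels and diverges, and you repair this by truncating completion at block size $n_\ell\ge n^{1/5}$ and fully sampling below. The extra full-sampling cost $n^2\cdot 2^{-L_0}=\mathcal{O}(n^{6/5})$ stays within budget, and the union bound over the $\mathcal{O}(n^{4/5})$ remaining blocks becomes $\mathcal{O}(n^{(4-\beta)/5})\to 0$ for $\beta>4$. This truncation also keeps the refined estimate of Theorem~\ref{thm:mc} legitimate, since $r\le \mu_0^{-1}n_\ell^{1/5}$ is then satisfied for large $n$. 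Your version is therefore more complete than the paper's on this point.
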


\begin{proof}
We consider diagonal and off-diagonal blocks separately. For the partition considered in this theorem, there are $n/r$ diagonal blocks. They are full rank, which requires them to be fully sampled, and thus $\mathcal{O}(rn)$ entries are needed.

For the off-diagonal blocks we need to use randomized sampling. Given that the matrix is partitioned in $L$ levels, we have that at the $\ell$-th level in the decomposition, each block will have size $n_{\ell} = n/2^{\ell}$, and we will have $n/n_{\ell} = 2^{\ell}$ of them. 
Thus following Theorem.~\ref{thm:mc}, we require $\mathcal{O} \left( r(n^{\ell})^{6/5} \log \left ((n^{\ell})^{6/5}\right) \right)$, 
or $\mathcal{O}( r(n/2^{\ell})^{6/5} \log (n^{\ell})^{6/5}))$
samples to reconstruct each of the blocks at the $\ell$-th level in the partition. 
In summary, we would require $\mathcal{O}\left( 2^{\ell} r(n/  2^{\ell})^{6/5} \log \left( (n/2^{\ell})^{6/5} \right) \right)$ to reconstruct, with high-probability, all the blocks at the $\ell$-th level of the partition. 

After adding the number of samples at each level we have that the total number of samples required scales as 
\begin{align}
    \mathcal{O} \left ( \sum_{\ell = 1}^{n_{levels}} 2^{\ell} r(n/  2^{\ell})^{6/5} \log{ (n/  2^{\ell})^{6/5}} \right ) & = \mathcal{O} \left ( rn^{6/5} \log{n} \sum_{\ell = 1}^{n_{levels}} 2^{-\ell/6}  \right ), \\ 
     & = \mathcal{O} \left ( rn^{6/5} \log{n} \right),
\end{align}               
where we used the fact that $\sum_{\ell = 1}^{n_{levels}} 2^{-\ell/6} = \mathcal{O}(1)$.
\end{proof}

\begin{figure}[htbp]
\centering

\includegraphics[trim = 29mm 0mm 29mm 10mm, clip, width = \textwidth]{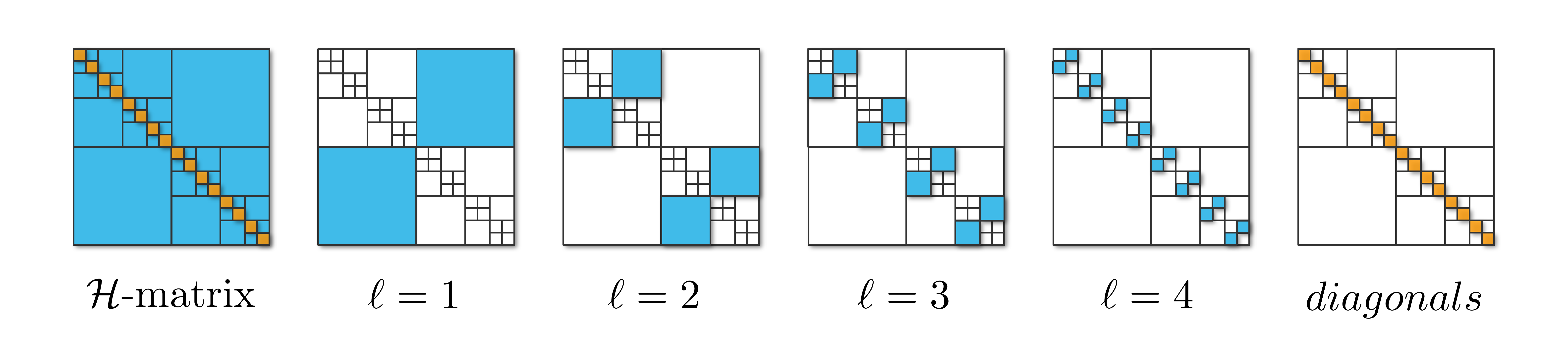}
 \caption{Sketch of a partition of an $\mathcal{H}$-matrix in four levels. Blocks in light-blue  are randomly sampled, and  those in orange are fully samples. We show the blocks considered at each level of the partition. As can be seen, we have $2^{\ell}$ number of blocks of size $n^{\ell} = n/2^{\ell}$.}
\label{fig:h_mat_decomposition}
\end{figure}

\begin{remark}
We have chosen the weak admissibility condition for the sake of simplicity:  the theorem follows for other types of partitioning. In particular, as it will be shown in Section \ref{sec:DtN_numerics}, using a strong admissibility condition with periodic boundary conditions yields similar results. 
\end{remark}

\section{Numerical Experiments}\label{sec:numerics}
We finally present several numerical experiments showcasing the framework introduced above. All the experiments were coded in Matlab 2019b using CVX to solve the optimization problems with Mosek \cite{mosek} as the back-end. The experiments were run on a single-socket workstation running an AMD 2950X processor with 128 GB of RAM. 

As the method suggests, we should first run the hierarchical matrix completion algorithm in Algorithm.~\ref{alg:reconstructh} to reconstruct the full DtN matrix, and then use the completed DtN matrix to reconstruct the media. For an accurate DtN matrix reconstruction, we need to ensure the matrix gets decomposed according to the $\mathcal{H}$-matrix admission condition, and the matrix completion algorithm is implemented within each off-diagonal block that is of low rank. The matrix completion algorithm requires two conditions to be held: the decoherent, and delocalization conditions.

To study the performance of the method, we will first demonstrate that the off-diagonal blocks indeed satisfy the decoherent and delocalization conditions. These conditions will be shown to be satisfied independent of the level of numerical refinement. This ensures that the matrix completion algorithm indeed reconstructs the DtN matrix accurately with limited data. With the demonstration of the accurate reconstruction of the DtN matrix, we further showcase the reconstruction of the media/conductivity. This final result will be compared with the reconstruction obtained by a complete random sampling of the DtN matrix. The comparison suggests samplings that honor the local low-rank structure of the off-diagonal blocks significantly outperforms a blind random sampling strategy.

In subsection~\ref{sec:DtN_numerics}, we verify the decoherent and the delocalization conditions and present  reconstruction of the DtN matrix. In subsection~\ref{sec:media_numerics} we showcase the reconstruction of the conductivity. In subsection~\ref{sec:OT_numerics} we demonstrate an extension of the presented method on optical tomography (OT). In OT, the albedo operator maps the incoming light to the outgoing light intensity, and is used to reconstruct the scattering coefficient, an optical property of the material. 

\subsection{Reconstructing the DtN map}\label{sec:DtN_numerics}
As presented in section~\ref{sec:recons_map} the reconstruction of DtN matrix relies on two key factors: a proper $\mathcal{H}$-matrix decomposition, and the proper use of the matrix completion algorithm in the low-rank sub-matrices that satisfy both decoherent and delocalization conditions.
We demonstrate both the $\mathcal{H}$-matrix decomposition and the final matrix completion results.

We now detail the numerical setup. In  $\mc{D} = [0,1]^2$ domain, we choose 
the Shepp-Logan phantom as the ground-truth media, as plotted in Figure~\ref{fig:impedance_1}. On the domain we use the nested grids, with $n_h = 2^{\ell}+1$ discrete points per dimension where $\ell$ is the refinement level. This leads to $n=2^{\ell+2}$ grid points along the boundary, making a DtN matrix of size $n\times n$. In this DtN matrix, we separate the diagonal and off-diagonal blocks following the strong admissibility condition. It was shown in section~\ref{sec:H_matrix} that these off-diagonal blocks are of low rank, and the matrix completion algorithm could potentially bring benefit if the decoherent and the delocalization conditions are satisfied. We choose two representative square blocks to verify these conditions. They are the block a) and b) demonstrated in Figure~\ref{fig:impedance_2}. As $\ell$ increases, these blocks have larger sizes accordingly: $n_a = n_h-1$, and $n_b = (n_h -1)/2$. 

\begin{figure}[htbp]
\centering
{%
\subfigure[Shepp-Logan phantom]{%
\label{fig:impedance_1}
\includegraphics[trim = 12mm 11mm 16mm 11mm, clip, height = 0.45\textwidth]{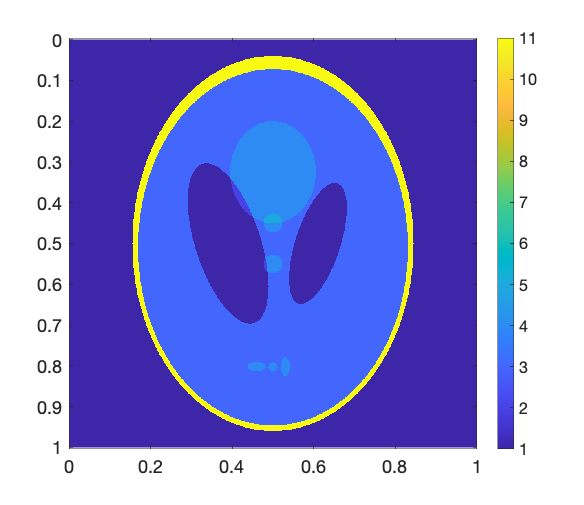}
} 
\subfigure[matrix partition]{%
\label{fig:impedance_2}
\includegraphics[height = 0.45\textwidth]{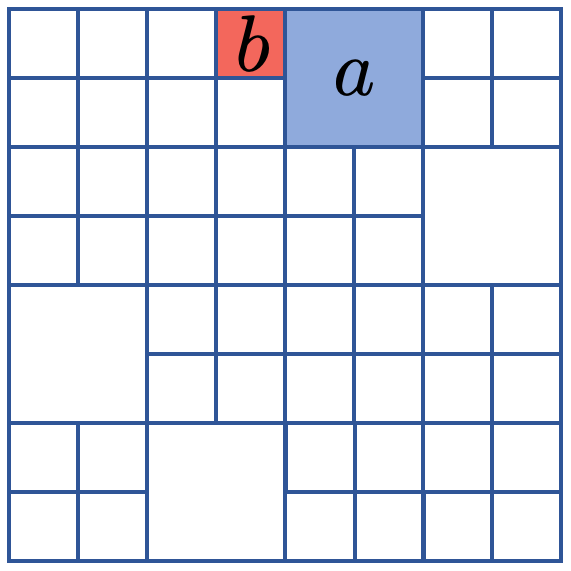}
}
 \caption{Figure \ref{fig:impedance_1} shows the impedance used for the experiments corresponding to the well-known Shepp-Logan phantom, where the color encodes the value of the impedance at each point. Figure \ref{fig:impedance_1} presents a partition of the matrix together with two blocks used for the numerical experiments.}
\label{fig:impedance}
}
\end{figure}

To show the low rank structure of the DtN matrix, we plot in Figure~\ref{fig:partitioning} different levels of $\mathcal{H}$-matrix partitioning. At each level of partitioning, we also plot the approximate rank of each block. The rank is evaluated as the number of singular values above $\epsilon = 10^{-6}$. As can be seen uniformly across all refinement levels, the approximate rank of all off-diagonal blocks is smaller than $5$. We also plot a typical off-diagonal DtN matrix block and its rank structure, shown in Figure~\ref{fig:DtN}. It is clear that these blocks are of low rank.

\begin{figure}[htbp]
{%
\centering
\subfigure[$n_h = 64$]{%
\includegraphics[width=0.48\textwidth]{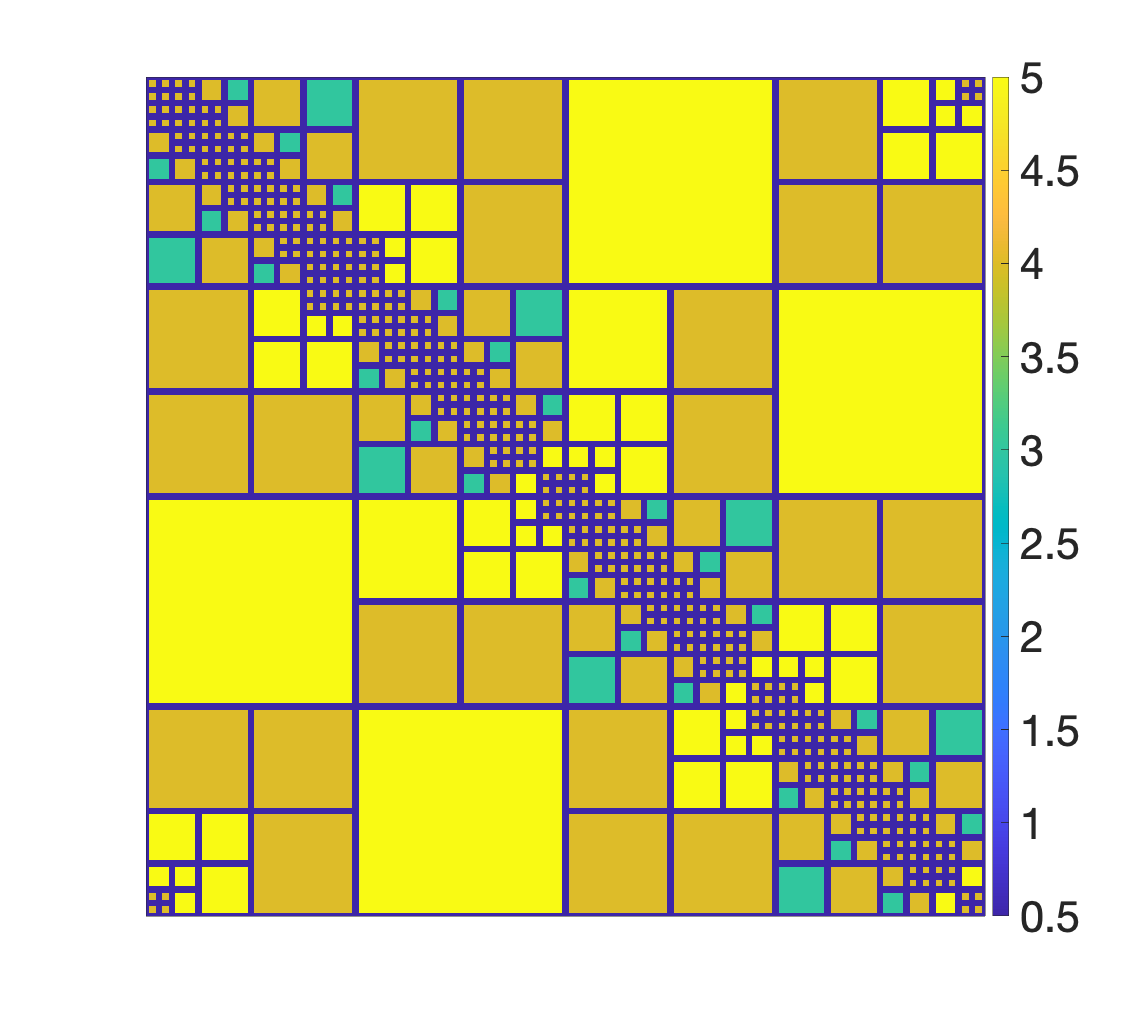}
} 
\subfigure[$n_h = 128$]{%
\includegraphics[width=0.48\textwidth]{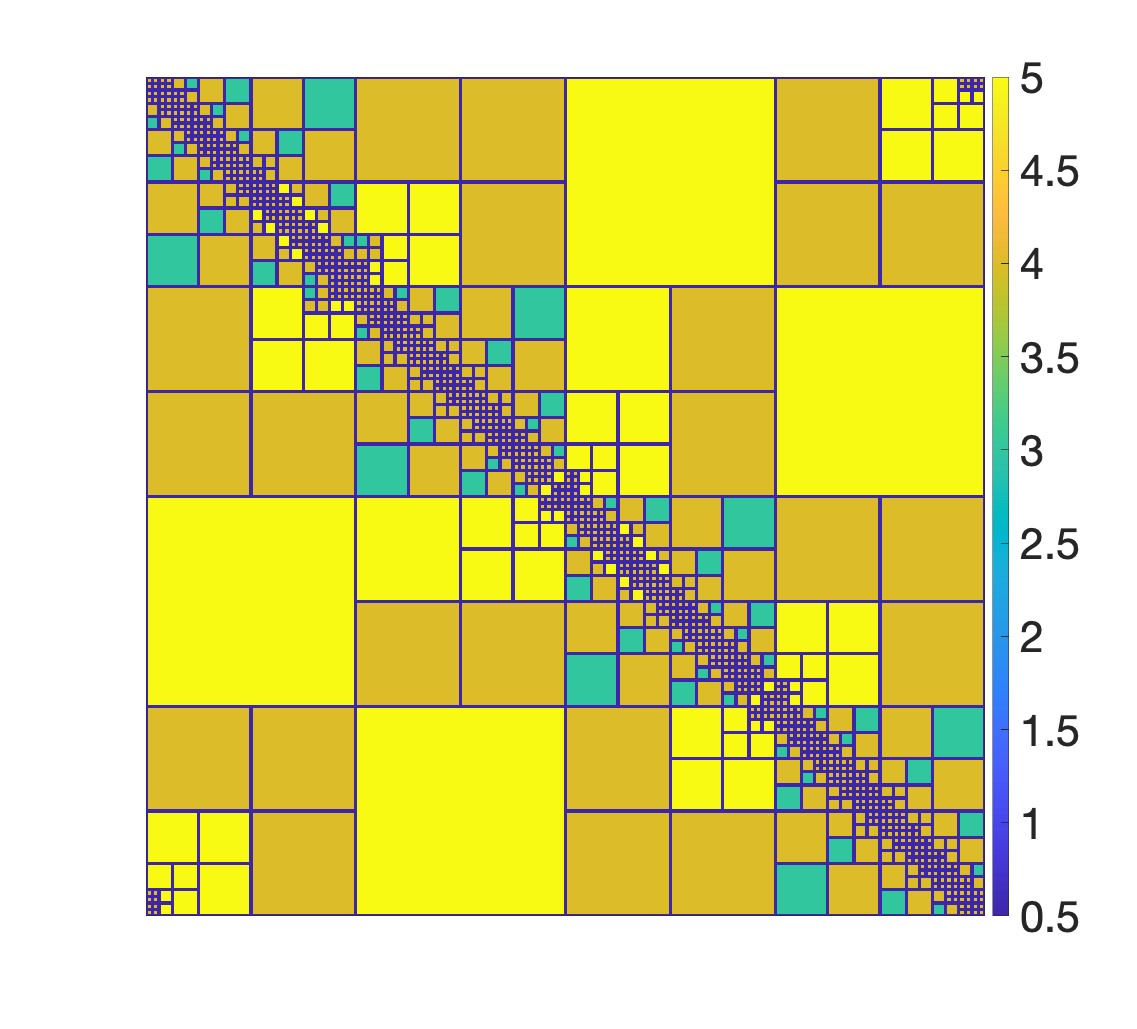}
}\\

\subfigure[$n_h = 256$]{%
\includegraphics[width=0.48\textwidth]{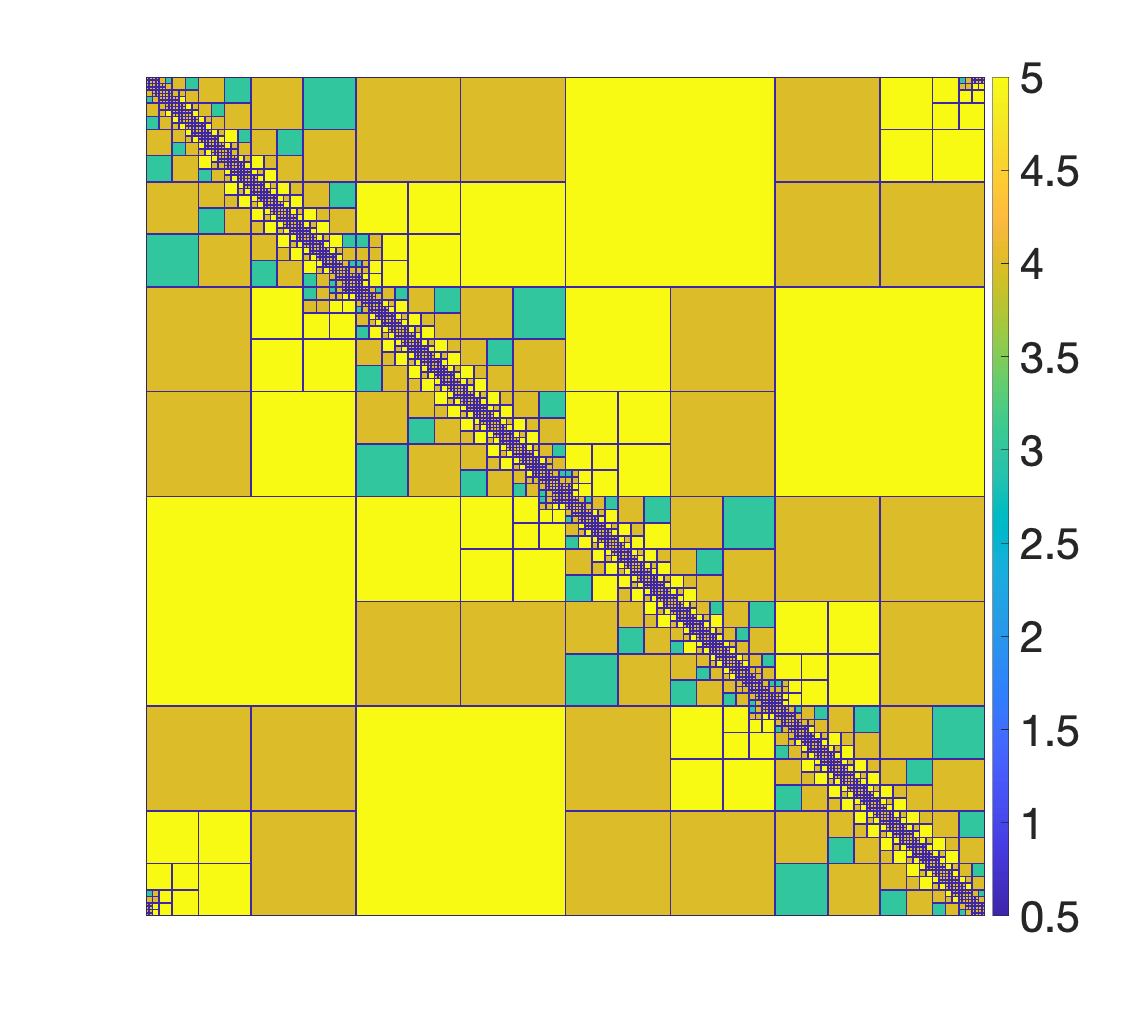}
}
\subfigure[$n_h = 512$]{%
\includegraphics[width=0.48\textwidth]{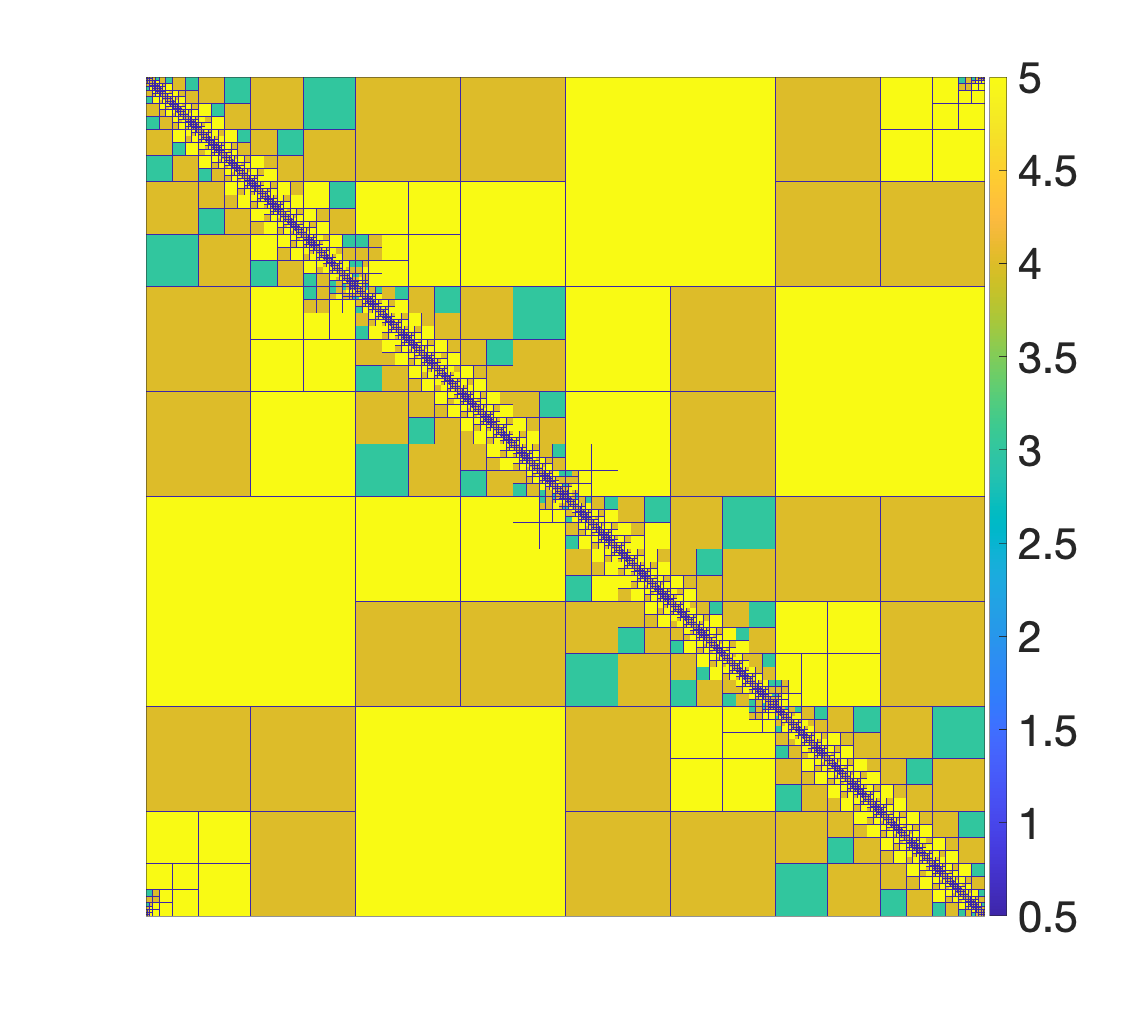}
}
\caption{Partition of the DtN map using different levels of refinements, the blocks are colored with their $\epsilon$-ranks (for $\epsilon = 10^{-6}$).}
\label{fig:partitioning}
}
\end{figure}

\begin{figure}[htbp]
\centering
{%
\subfigure[block of DtN map]{%
\includegraphics[width = 0.45\textwidth]{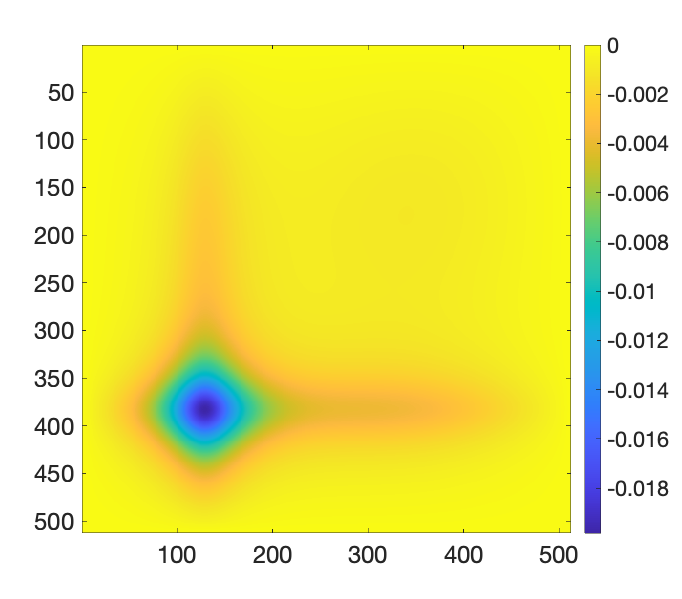}
}
\subfigure[eigenvalues]{%
\includegraphics[width = 0.45\textwidth]{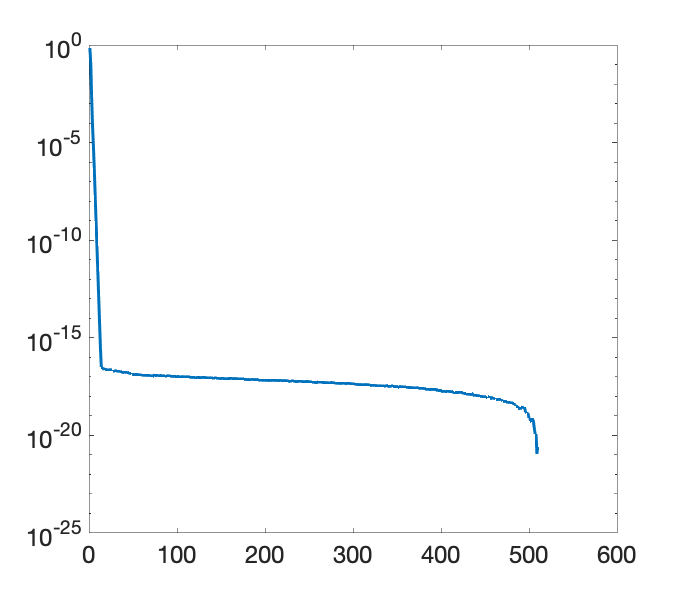}
}
\caption{ Plot of a typical off-diagonal block of the DtN map, where the color correspond to the value of each entry, along with its eigenvalues.}\label{fig:DtN}
}
\end{figure}

To verify the conditions, including the decoherent and delocalization conditions, we plot the coherence indices and the maximum absolute values defined in~\eqref{eqn:def_coherence} for block a) and b). These are shown in Figure~\ref{fig:coherence}. It is clear that as $\ell$ increases, the coherence index stays stable for block a), and only increases slightly for block b), saturating at a relatively small number quickly. The maximum value in the matrix entry evaluation decreases quickly for both matrix blocks, shown in Figure~\ref{fig:max_value}. These evidence suggest that employing the matrix completion algorithm on off-diagonal blocks will provide satisfying results.

\begin{figure}[htbp]
\centering
{%
\subfigure[coherence]{%
\label{fig:coherence}
\includegraphics[trim = 00mm 0mm 12mm 0mm, clip, width = 0.48\textwidth]{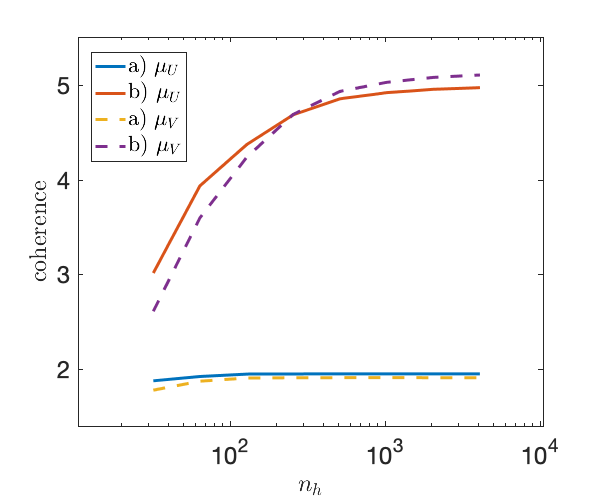}
} 
\subfigure[max val $\mathsf{U}\cdot \mathsf{V}^\ast$]{%
\label{fig:max_value}
\includegraphics[trim = 00mm 0mm 12mm 0mm, clip, width = 0.48\textwidth]{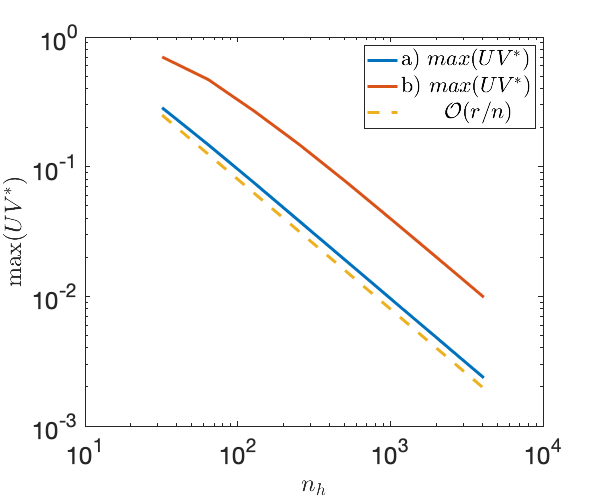}
}
 \caption{ Coherence index and maximum absolute value of $\mathsf{U}\cdot \mathsf{V}^\ast$, for  blocks a) and b) at different levels of refinement in the discretization.}
\label{fig:max_value_all}
}
\end{figure}

Finally we reconstruct the DtN matrix according to Algorithm~\ref{alg:reconstructh}. Since the reconstruction is performed for each block separately, we take the reconstruction of block a) as an example. At each level of the refinement, we select entries from block a) according to the Bernoulli distribution with parameter $p$. We then take the values of these entries as the given data to solve the matrix completion optimization problem \eqref{eqn:opt_nuclear}. In Figure~\ref{fig:reconstruction} we plot the original block, the location of the selected entries, and the reconstruction. Clearly, with $p=0.1$, only ten percent of the data given, we already construct this block with high accuracy.

\begin{figure}[htbp]
{%
\subfigure[original block]{%
\label{fig:reconst_exact}
\includegraphics[trim={10mm 10mm 10mm 10mm}, width = 0.3\textwidth]{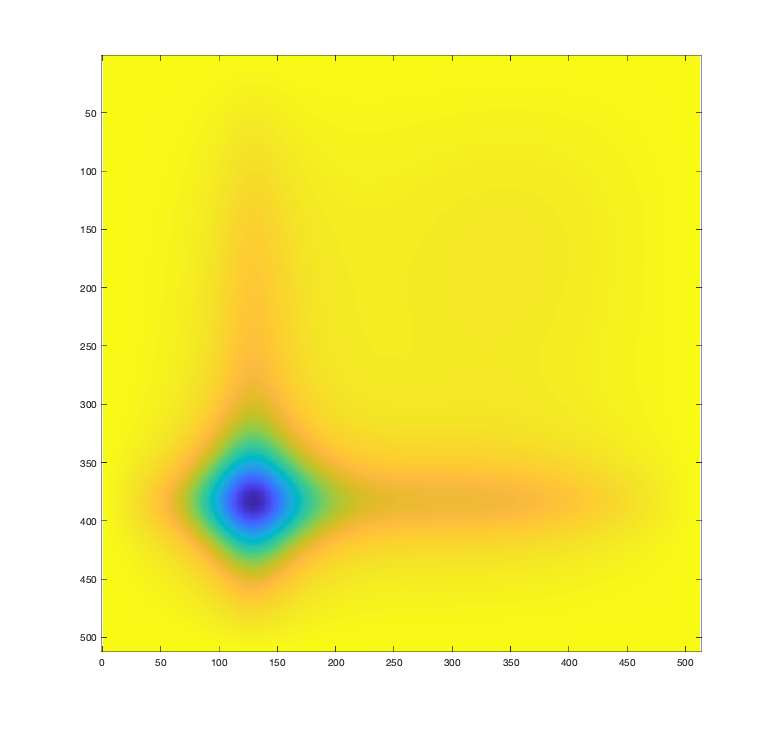}
} 
\subfigure[mask]{%
\label{fig:reconst_spy}
\includegraphics[trim={10mm 10mm 10mm 10mm}, width = 0.3\textwidth]{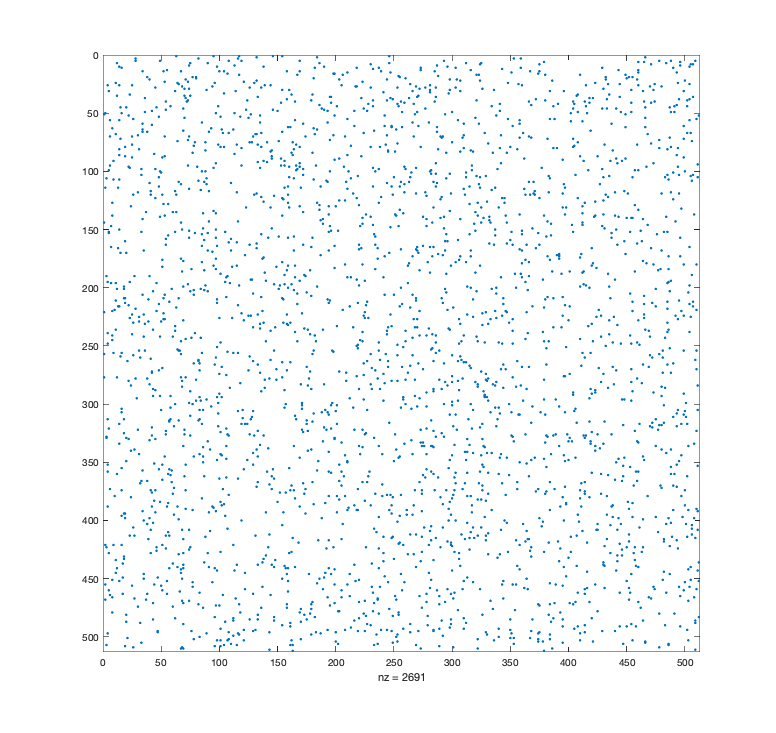}
}
\subfigure[reconstruction]{%
\label{fig:reconst_numer}
\includegraphics[trim={10mm 10mm 10mm 10mm}, width = 0.3\textwidth]{off-diagonal_reconstruction.png}
}
\caption{Reconstruction of block a) in the DtN map with 
with $p = 0.1$.}
\label{fig:reconstruction}
}
\end{figure}

To quantitatively evaluate the algorithm, for each predetermined $p$ and refinement level $\ell$, we perform the selection and reconstruction process $50$ times, and document the success ratio as a function of $p$ and $\ell$. A successful run is defined as a run where the reconstructed block is within $10^{-4}$ error of the ground-truth in Frobenius norm. In Figure~\ref{fig:succes_ratio} we plot the success ratio of reconstructing block a). For low refinement level with coarse discretization, the DtN blocks has small sizes, and the matrix completion algorithm requires a higher percentage of known data for a high success probability of the reconstruction. On refined meshes, small $p$ is sufficient for an accurate reconstruction with high probability. For example, for a matrix of size $512 \time 512$, only up to $5\%$ of the entries are needed to reconstruct the block with a high probability.

\begin{figure}
    \centering
    \includegraphics[height = 0.4\textwidth,trim = 20mm 0mm 20mm 0mm, clip]{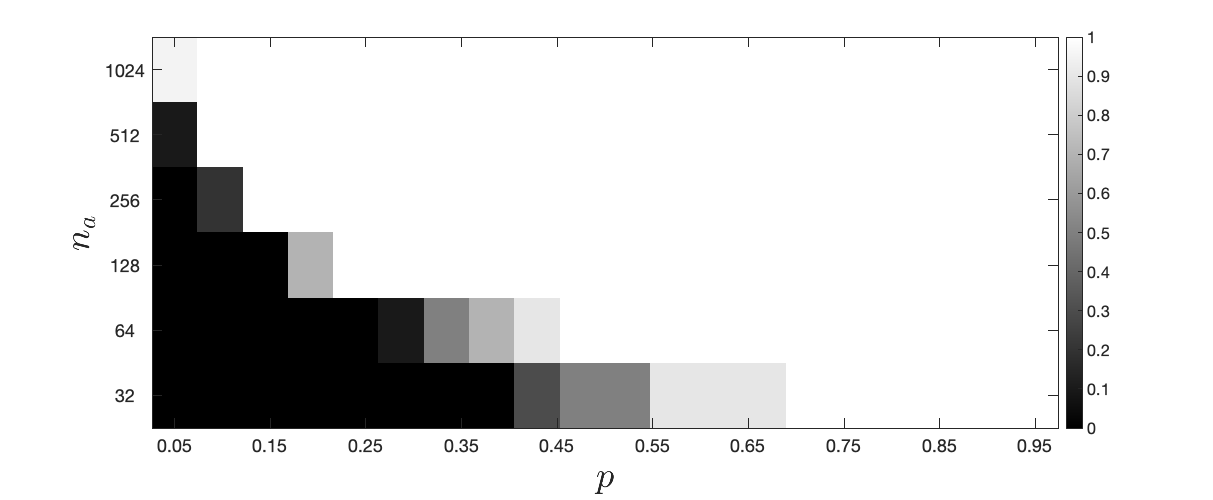}
    \caption{Success ratio for the reconstruction of block a) at different refinements and different density of the sampling mask where $p \sim m/n_a^2$ and $m$ is the number of sampling points. In the plot the color encodes the success ratio, with a lighter color indicating a higher success ratio.}
    \label{fig:succes_ratio}
\end{figure}

\subsection{Reconstruction of the media}\label{sec:media_numerics}

To solve the minimization problem \eqnref{eqn:opt_a}, we use the unconstrained Gauss-Newton method with a constant initial guess. We declare that the optimization algorithm converges if  the gradient norm is less than $10^{-9}$ or the number of iterations exceeds $10,000$. In Figure~\ref{fig:impedance_exact} we plot the groundtruth impedance $\a$, and in Figure~\ref{fig:impedance_reconstructed} we plot the reconstructed media with the exact DtN matrix $\DtNha{\a}$. 
As can be seen, though the reconstructed media captures the two circular blobs it does not resemble the groundtruth. This is not surprising as the unique reconstruction of the media is guaranteed only with the infinite data limit (i.e. the full DtN map, instead of the DtN matrix, is known), no discretization, and infinite precision computation. Improved results can be obtained with total variation regularization, for example, to capture sharp edges of the blobs, but this is beyond the scope of this paper. 
Nevertheless, this reconstruction with the exact DtN matrix provides a benchmark as it is the best case scenario for computation. 
We now obtain $\eval{\DtNha{\a}}_{\Omega}$ by subsampling the exact DtN matrix using the mask in Figure~\ref{fig:impedance_mask} with its design following the criteria in Theorem~\ref{thm:reconstruction}.
Figure~\ref{fig:completed_impedance} shows a reconstruction using the completed matrix $\DtNhRa{\a}$ obtained from $\eval{\DtNha{\a}}_{\Omega}$.  We observe that the result is visibly identical  with the reconstruction using the exact DtN matrix in Figure~\ref{fig:impedance_reconstructed}. As a comparison, we also reconstruct the media by solving~\eqnref{eqn:opt_a} directly with $\eval{\DtNha{\a}}_{\Omega}$ instead of the completed DtN matrix $\DtNhRa{\a}$. As can be seen in Figure~\ref{fig:non_completed_impedance}, the reconstruction using $\eval{\DtNha{\a}}_{\Omega}$ is not able to capture the two blobs.


\begin{figure}[htbp]
{%

\centering
 \includegraphics[width = 0.48\textwidth,trim = 50mm 10mm 20mm 10mm, clip]{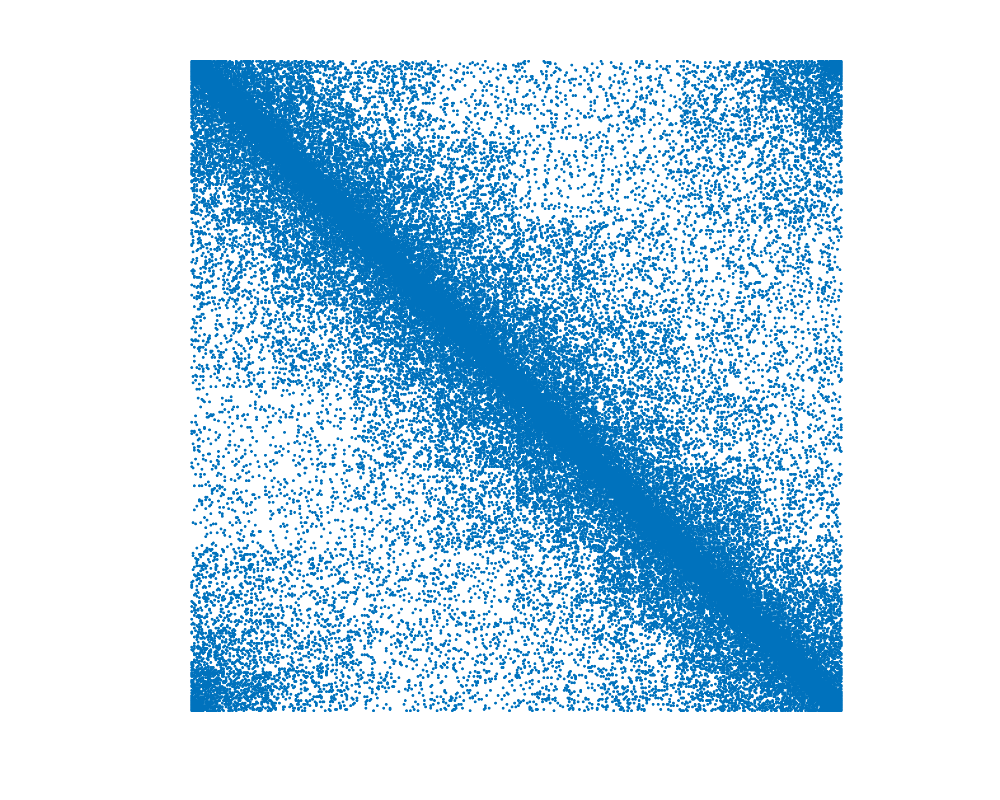}

 \caption{Mask used to down-sample the full (exact) DtN matrix $\DtNha{\a}$. Each blue dot corresponds to a sampled entry, whereas the white color corresponds to not-sampled entries. Note that the number of sampling points is denser close to the diagonal.}\label{fig:impedance_mask}
}
\end{figure}

\begin{figure}[htbp]
{%

\subfigure[exact impedance 
]{%
\label{fig:impedance_exact}
\includegraphics[height = 0.45\textwidth,trim = 50mm 10mm 20mm 10mm, clip]{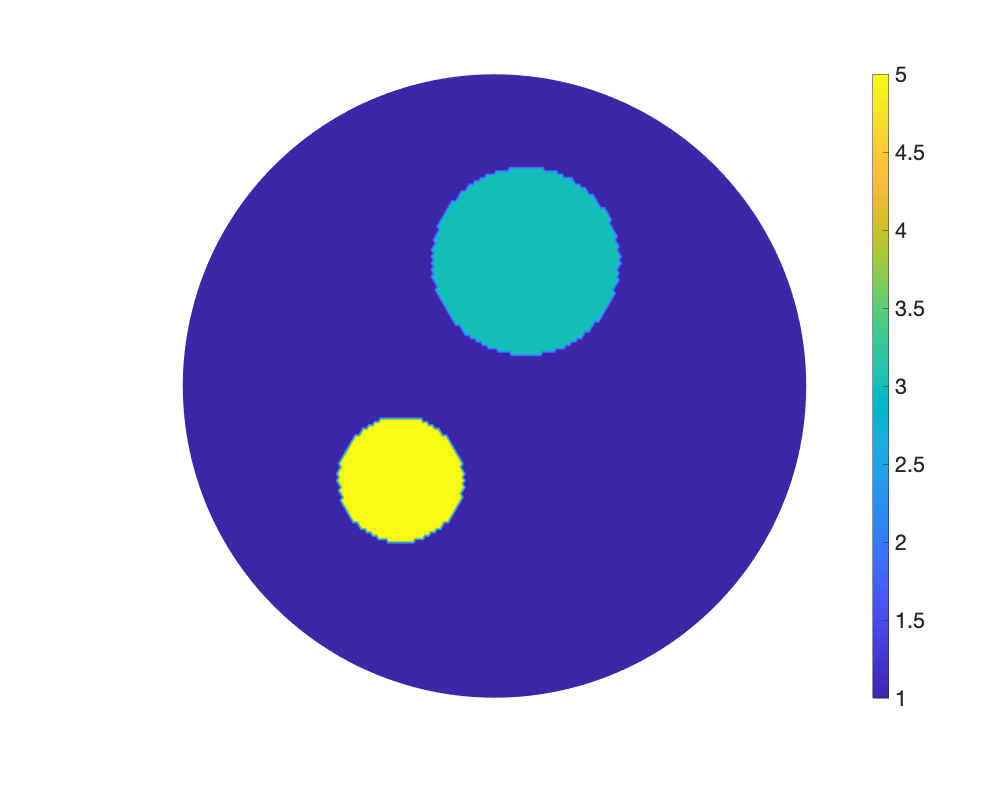}
} 
\subfigure[Reconstruction with $\DtNha{\a}$]{%
\label{fig:impedance_reconstructed}
 \includegraphics[height = 0.45\textwidth,trim = 50mm 10mm 20mm 10mm, clip]{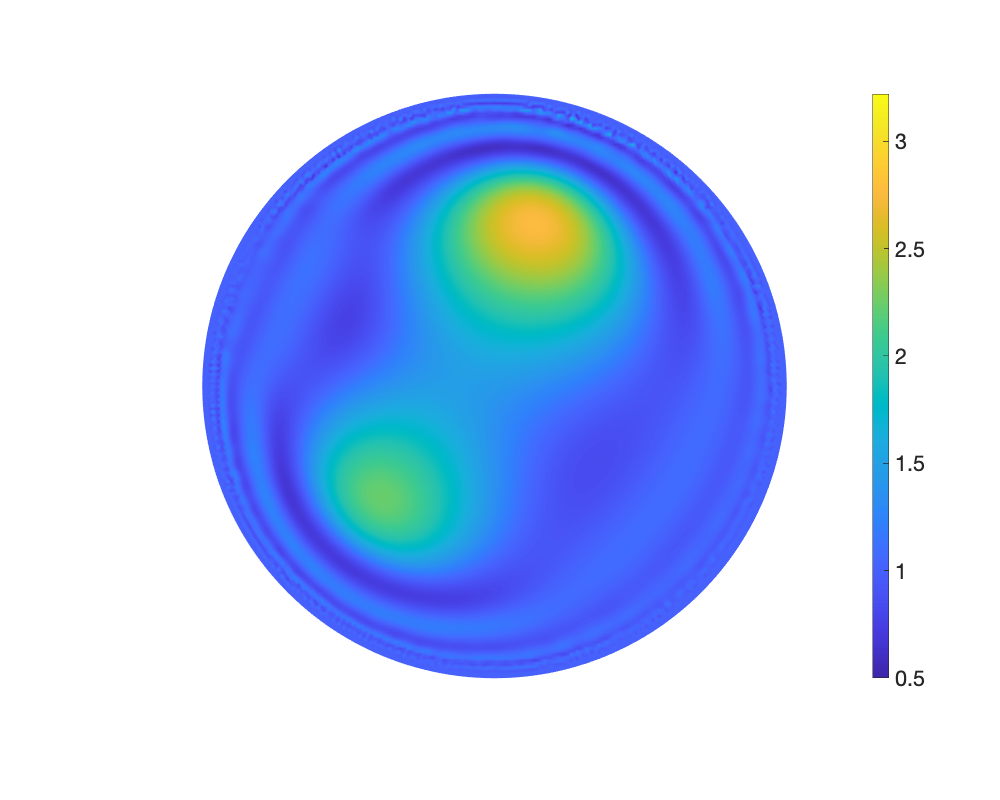}
}
\subfigure[Reconstruction with $\DtNhRa{\a}$]{%
\label{fig:completed_impedance}
\includegraphics[height = 0.45\textwidth,trim = 50mm 10mm 20mm 10mm, clip]{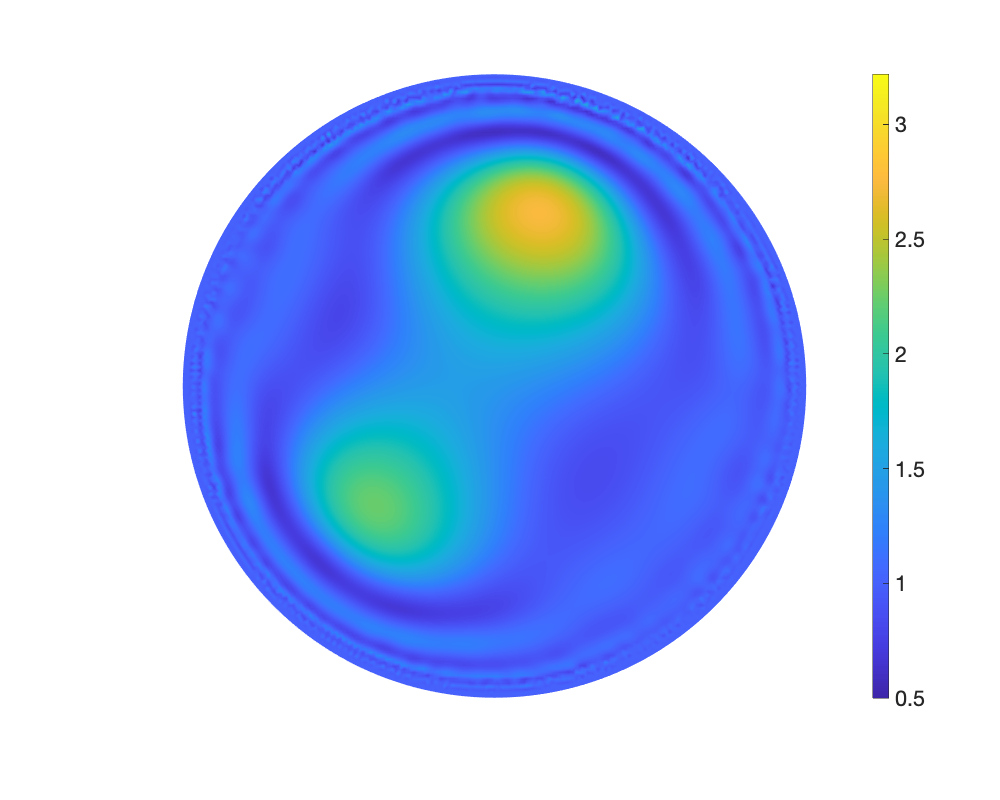}
} 
\subfigure[Reconstruction with $\eval{\DtNha{\a}}_{\Omega}$]{%
\label{fig:non_completed_impedance}
\includegraphics[height = 0.45\textwidth,trim = 50mm 10mm 20mm 10mm, clip]{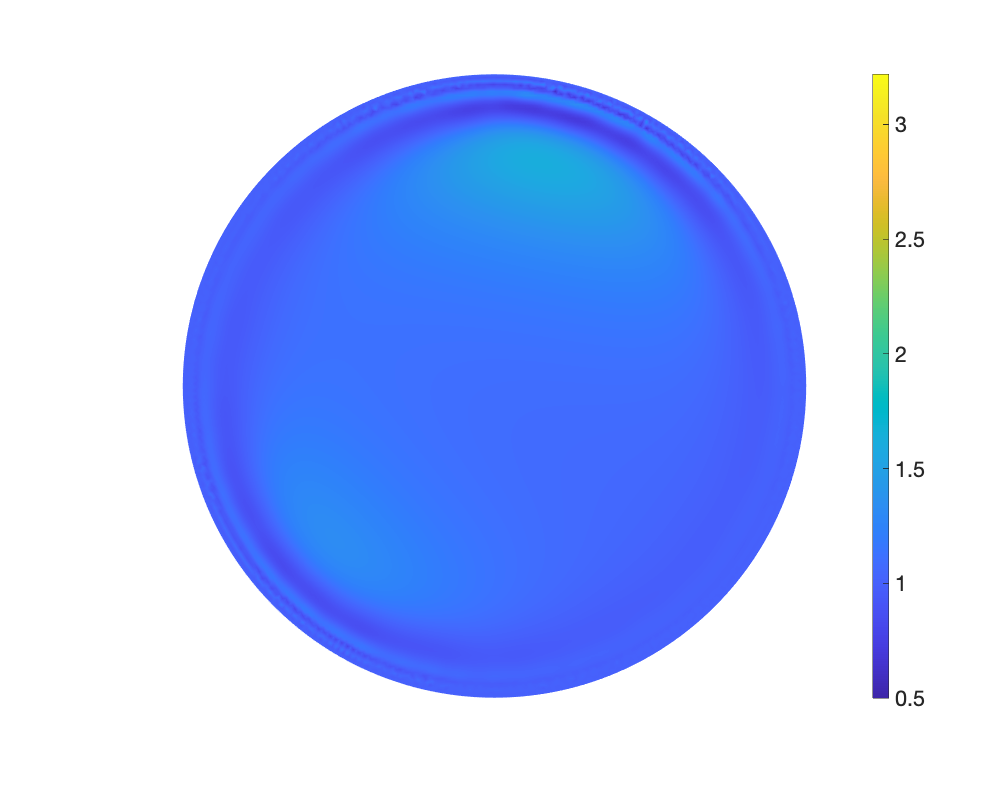}
}
\caption{Reconstructed impedance with the exact DtN matrix $\DtNha{\a}$, with the completed DtN matrix $\DtNhRa{\a}$ reconstructed from a subsampled DtN matrix $\eval{\DtNha{\a}}_{\Omega}$, and directly with the subsampled DtN matrix.}
}
\end{figure}

\subsection{Optical tomography}\label{sec:OT_numerics}
We have used the EIT problem to show that data completion can not only bridge the gap between theoretical and computational inverse problems but also help improve computational inverse solutions. While the former depends on the available theories of the inverse problem under consideration, the latter is expected to be valid for all problems. To demonstrate that data completion is also possible for other problems with $\mathcal{H}$-matrix structure, we now consider an optical tomography problem, where the radiative transfer equation (RTE) serves as the forward model, and its scattering coefficient\textemdash the unknown parameter\textemdash reflects the optical property of the media. More specifically, let $f(x,v)$ presents the density of photon particles at location $x$ moving in direction $v$, then RTE characterizes the dynamics of this distribution function, and in steady state it reads
\begin{equation*}
    v\nabla f = \frac{1}{\Kn}\sigma_s(x)\left[\int_{v'} f\rd{v'} - f\right]\,.
\end{equation*}
Here the left hand side describes the particles moving in direction $x$ with velocity $v$, and the term on the right suggests the scattering with the intensity characterized by $\sigma_s$. $\Kn$ is called the Knudsen number. 
The ``inflow" part of the boundary
\[
\Gamma_-=\{(x,v): x\in\partial\mathcal{D}\,,v\cdot n_x <0\}\,
\]
is where lights are shined into the media, and one takes measurement on the ``outflow" part of the boundary
\[
\Gamma_+=\{(x,v): x\in\partial\mathcal{D}\,,v\cdot n_x >0\}\,.
\]
The map that directs incoming data to the outgoing data is known as the albedo operator and is used to reconstruct $\sigma_s$.

 Figure~\ref{fig:rte_compression_diffusive_1} plots the albedo matrix (discretized albedo operator), along with its eigenvalues in Figure~\ref{fig:rte_compression_diffusive_2} in the diffusion regime $\Kn\ll 1$. It is clear that the operator is approximately low rank. In this case, one would be able to approximate the full operator by solving the optimization problem in \eqref{eqn:opt_nuclear}.

In the ballistic regime, $\Kn\sim 1$, then the albedo matrix, plotted in Figure~\ref{fig:rte_compression_1}, is no longer of low rank. However, it is approximately an $\mathcal{H}$-matrix. A partition of the albedo matrix is shown in Figure~\ref{fig:rte_compression_2}, and we plot the $\epsilon$-rank (with $\epsilon =10^{-6}$) for all the blocks in Figure~\ref{fig:rte_compression_3}. As can be seen, the $\epsilon$-rank is uniformly bounded by $5$ in each block.
Analogous to the DtN matrix, we present the reconstruction of one typical off-diagonal block in this albedo matrix. For the block shown in Figure~\ref{fig:rte_compression_2}, we select data according to the Bernoulli distribution with parameter $p$, and the selected entries serve as given data in the matrix completion algorithm.  Figure~\ref{fig:succes_ratio_RTE} plots the success ratio, computed with $20$ experiments for each $p$ and refinement level. Here success means the reconstructed matrix is within $10^{-4}$ error in Frobenius norm of the groundtruth. It is clear that the chance of successful reconstruction increases as the dimension of the matrix increases, as predicted by the theory. We leave the detailed  bridging-the-gap analysis and  parameter reconstructions for future work.

\begin{figure}[htbp]
{%
\subfigure[Albedo matrix]{%
\label{fig:rte_compression_diffusive_1}
\includegraphics[trim={10mm 12mm 0mm 10mm}, clip, width = 0.45\textwidth]{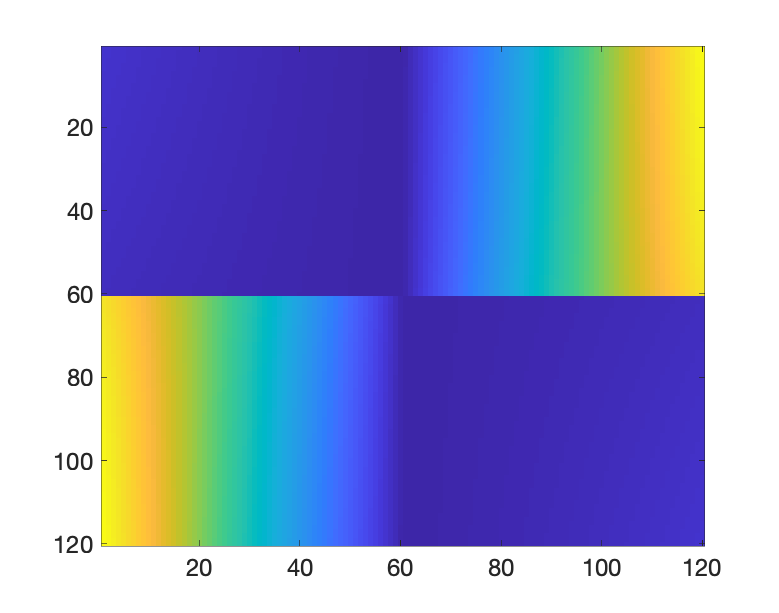}
} 
\subfigure[Eigenvalues of the Albedo matrix]{%
\label{fig:rte_compression_diffusive_2}
\includegraphics[trim={10mm 12mm 0mm 10mm}, clip, width = 0.45\textwidth]{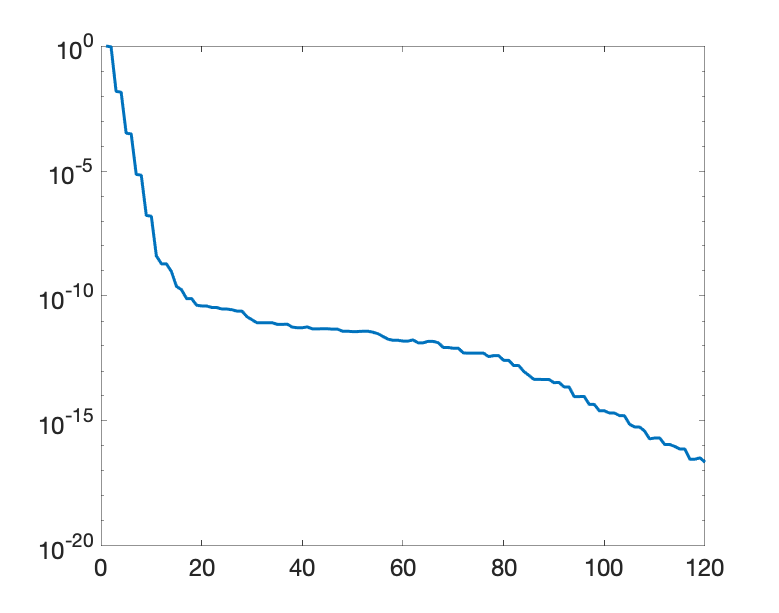}
}
\caption{Figure~\ref{fig:rte_compression_diffusive_1} colors the albedo matrix in the diffusive regime when $\Kn = 2^{-5}$ with the values of its entries. Figure~\ref{fig:rte_compression_diffusive_2} plots eigenvalues of the albedo matrix.}
\label{fig:rte_compression_diffusive}
}
\end{figure}

\begin{figure}[htbp]
{%
\subfigure[albedo operator]{%
\label{fig:rte_compression_1}
\includegraphics[trim={10mm 10mm 0mm 10mm}, height = 0.28\textwidth]{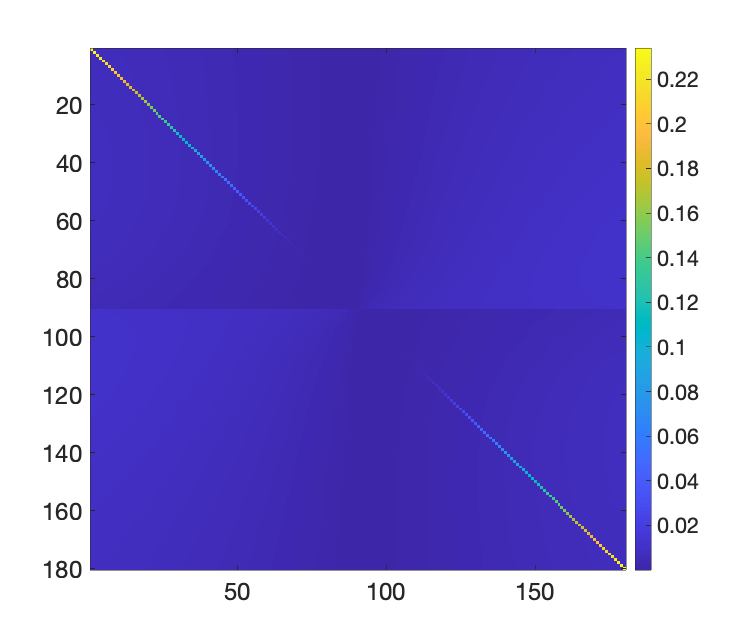}
} 
\subfigure[partitioning]{%
\label{fig:rte_compression_2}
\includegraphics[trim={0mm 0mm 0mm 0mm}, clip, height = 0.27\textwidth]{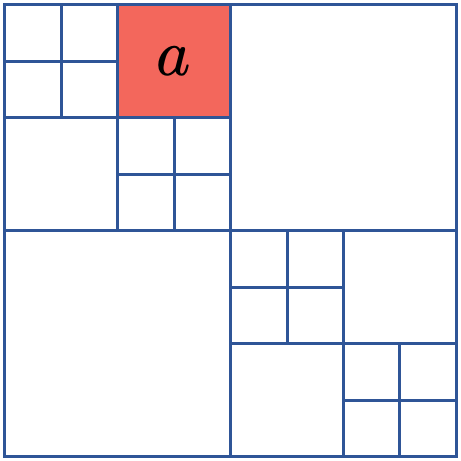}
}
\subfigure[ranks]{%
\label{fig:rte_compression_3}
\includegraphics[trim={10mm 10mm 0mm 10mm}, height = 0.28\textwidth]{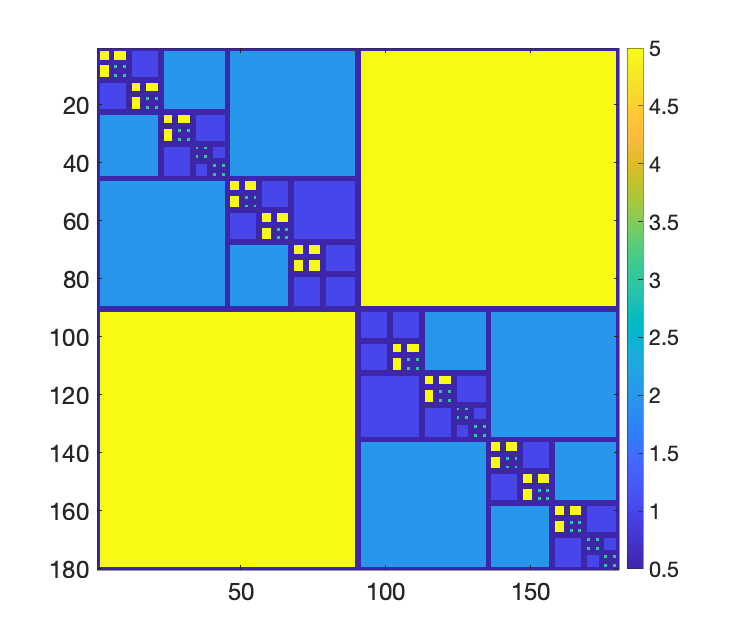}
}
\caption{From left to right, plot of the albedo matrix for $\Kn = 1$ , partitioning of the albedo matrix, and the $\epsilon$-rank of each block in the partition. Here each block is colored with its $\epsilon$-rank.}
\label{fig:rte_compression}
}
\end{figure}

\begin{figure}
    \centering
    \includegraphics[height = 0.4\textwidth,trim = 20mm 0mm 20mm 0mm,clip]{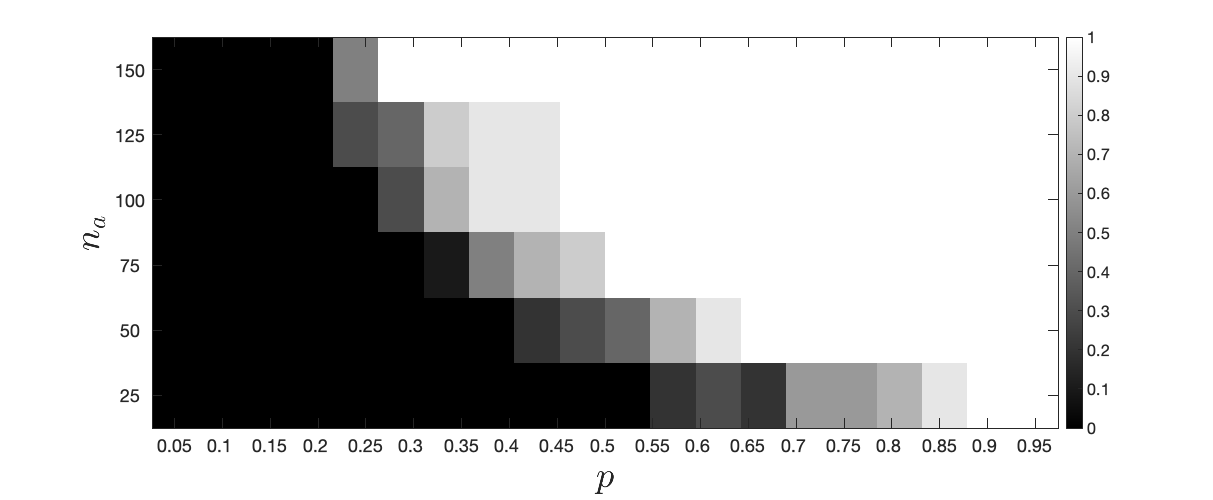}
    \caption{ Success ratio for the reconstruction of block a) in Figure~\ref{fig:rte_compression_2} of the albedo matrix with $\Kn = 1$ at different refinement level and different value of $p$. In the plot the color encodes the success ratio.}
    \label{fig:succes_ratio_RTE}
\end{figure}

\section{Conclusions}\label{sec:conclusions}
There is a gap between theoretical and numerical approaches for inverse problems. While in theory, infinite-dimensional datasets, encoded in the ItO map, are available to infer a function living in infinite dimensional space, in the numerical and experimental settings, both the available data and the reconstructed parameter are finite-dimensional. This mismatch prevents the application of the theory in guiding and improving practical computational inverse solutions: In very rare cases can one assert the unique and stable reconstruction of discretized parameters.

We have presented a framework to bridge this gap using data completion with the EIT problem as the testbed. In particular, we view finitely experimental data points as entries in the input-to-output\textemdash Dirichlet-to-Neumann (DtN)\textemdash matrix. Since the DtN matrix has the $\mathcal{H}$-matrix structure,  we exploit the off-diagonally low-rank property and the matrix completion technique to informatively collect mostly random data points in the matrix, and fill in the unknown entries with a matrix completion method.
The goal of matrix completion is twofold: \texttt{I)}{\em bridging the gap} and \texttt{II)} {\em improving the quality of computational inverse solutions}. 
For \texttt{I)}, the DtN map is rigorously recovered by lifting the completed DtN matrix, up to discretization error with high probability.  
This allows us to apply the inverse theory to asymptotically show the unique and stable reconstruction of parameters.
For \texttt{II)} we have numerically demonstrated that\textemdash unlike traditional computational inverse problems that uses the incomplete DtN matrix\textemdash we deploy the completed DtN matrix to reconstruct the unknown parameters. 
The numerical results have shown that the reconstructions using the completed DtN matrix and the exact DtN matrix are visibly identical, while the reconstruction directly from incomplete DtN matrix is completely off.


We emphasize that the goal of the current paper is to propose a general framework to bridge and improve theoretical and computational inverse problems. For a thorough error analysis, we need a more precise estimate of the decay of the singular values in each block of the input-to-output matrix. This highly depends on the specific equation encoded in the forward map. This part of error analysis is not yet available in its most precise form in the literature, and thus is left for future work.

\newpage

\begin{appendix}
\section{Proof of Theorem~\theoref{asymptoticUniquenessFull}}\label{sec:appendix_proof}
Let us define $\Phit$ as the unique solution of the following problem
\begin{equation}
\eqnlab{ellipticWeakt}
\int_{\Omega}\a\,\Grad \Phit \cdot \Grad \v\,d\Omega = 0, \quad \eval{\Phit}_\pOmega = \phih, \quad \forall \v \in \Hone_0\LRp{\Omega},
\end{equation}
where again $\phih = \Pih\phi$. Let us denote $\DtNa{\a}^\dagger$ via
\begin{equation}
\eqnlab{DtNat}
\LRa{\DtNa{\a}^\dagger\phi,\psi}:=
\int_{\Omega}\a\,\Grad \Phit \cdot \Grad \Psi\,d\Omega,
\end{equation}
where $\Psi \in H^1\LRp{\Omega}$ can be any extension of $\psi$ such that $\eval{\Psi}_{\pOmega} = \psi$.
\begin{lemma}
\lemlab{semiError}
There holds
\begin{equation}
\eqnlab{semiError}
\nor{\DtNa{a} - \DtNa{\a}^\dagger}_{\H^{1/2}\LRp{\pOmega}\to\H^{-1/2}\LRp{\pOmega}} \le c\nor{\I - \Pih}_{\H^{1/2}\LRp{\pOmega}\to\H^{1/2}\LRp{\pOmega}},    
\end{equation}
 where $\I$ is the identity map and $c$ is a constant independent of the mesh size $\h$. 
\end{lemma}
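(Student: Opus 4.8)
The plan is to reduce the operator-norm bound to a single elliptic energy estimate for the $\a$-harmonic extension of the projection error $\LRp{\I - \Pih}\phi$. First I would test the difference of the two (variational) bilinear forms against an arbitrary $\psi \in \H^{1/2}\LRp{\pOmega}$ with an extension $\Psi$: subtracting \eqnref{DtNat} from the variational form of $\DtNa{\a}$ gives
\begin{equation*}
\LRa{\LRp{\DtNa{\a} - \DtNa{\a}^\dagger}\phi, \psi} = \int_{\Omega} \a\, \Grad\LRp{u - \Phit}\cdot \Grad \Psi \, d\Omega,
\end{equation*}
where $u$ solves \eqnref{ellipticWeak} with boundary data $\phi$ and $\Phit$ solves \eqnref{ellipticWeakt} with boundary data $\phih = \Pih\phi$. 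Before proceeding I would check that each form is independent of the chosen extension: replacing $\Psi$ by $\Psi + \v_0$ with $\v_0 \in \Hone_0\LRp{\Omega}$ leaves the integral unchanged, by the weak formulations of $u$ and $\Phit$, so the right-hand side is well defined.

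The key observation is that $\w := u - \Phit$ is itself $\a$-harmonic, namely
\begin{equation*}
\int_{\Omega} \a\, \Grad \w \cdot \Grad \v \, d\Omega = 0 \quad \forall \v \in \Hone_0\LRp{\Omega}, \qquad \eval{\w}_\pOmega = \phi - \phih = \LRp{\I - \Pih}\phi.
\end{equation*}
I would then invoke the well-posedness of this elliptic problem: fixing a bounded extension operator $\E : \H^{1/2}\LRp{\pOmega} \to \Hone\LRp{\Omega}$, writing $\w = \E g + \w_0$ with $g = \LRp{\I-\Pih}\phi$ and $\w_0 \in \Hone_0\LRp{\Omega}$, and applying the Lax--Milgram/coercivity argument (using that $\a$ is bounded above and below away from zero) to obtain
\begin{equation*}
\nor{\Grad \w}_{L^2\LRp{\Omega}} \le \nor{\w}_{\Hone\LRp{\Omega}} \le c \nor{\LRp{\I - \Pih}\phi}_{\H^{1/2}\LRp{\pOmega}}.
\end{equation*}
For the test function I would simply take $\Psi = \E\psi$, so that $\nor{\Grad \Psi}_{L^2\LRp{\Omega}} \le c \nor{\psi}_{\H^{1/2}\LRp{\pOmega}}$.

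Combining these two bounds through Cauchy--Schwarz, together with $\nor{\a}_\infty < \infty$, yields
\begin{equation*}
\snor{\LRa{\LRp{\DtNa{\a} - \DtNa{\a}^\dagger}\phi, \psi}} \le c \nor{\LRp{\I - \Pih}\phi}_{\H^{1/2}\LRp{\pOmega}} \nor{\psi}_{\H^{1/2}\LRp{\pOmega}}.
\end{equation*}
Since $\nor{\LRp{\I - \Pih}\phi}_{\H^{1/2}\LRp{\pOmega}} \le \nor{\I - \Pih}_{\H^{1/2}\LRp{\pOmega}\to\H^{1/2}\LRp{\pOmega}}\nor{\phi}_{\H^{1/2}\LRp{\pOmega}}$, I would take the supremum first over $\psi$, using the duality $\H^{-1/2}\LRp{\pOmega} = \LRp{\H^{1/2}\LRp{\pOmega}}^{*}$, and then over $\phi$, which delivers \eqnref{semiError}. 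The routine ingredients are the trace/extension and coercivity estimates; the only points demanding care are the well-posedness of the bilinear-form definition under a change of extension, and the bookkeeping that every constant ($\nor{\a}_\infty$, the coercivity constant, and the extension-operator norm) is independent of the mesh size $\h$, so that the entire $\h$-dependence is isolated in $\nor{\I - \Pih}_{\H^{1/2}\LRp{\pOmega}\to\H^{1/2}\LRp{\pOmega}}$.
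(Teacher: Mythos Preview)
Your proof is correct and follows essentially the same route as the paper: both identify $u-\Phit$ as the $\a$-harmonic extension of $\LRp{\I-\Pih}\phi$, use elliptic well-posedness to bound $\nor{\Grad\LRp{u-\Phit}}_{L^2\LRp{\Omega}}$ by $c\nor{\LRp{\I-\Pih}\phi}_{\H^{1/2}\LRp{\pOmega}}$, and then apply Cauchy--Schwarz against a bounded extension $\Psi$ of $\psi$. You simply spell out the intermediate steps (extension operator, independence of the choice of $\Psi$, $\h$-independence of the constants) that the paper leaves implicit.
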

\begin{proof}
By definition we have
\begin{align*}
\snor{\LRa{\DtNa{a}\phi,\psi}- \LRa{\DtNa{\a}^\dagger\phi,\psi}} &= \snor{\int_{\Omega}\a\,\Grad \LRp{\Phi - \Phit} \cdot \Grad \Psi\,d\Omega} \\ &\le c \nor{\psi}_{\H^{1/2}\LRp{\pOmega}}\nor{\Grad \LRp{\Phi - \Phit}}_{\L^2\LRp{\Omega}} \\
&\le c \nor{\psi}_{\H^{1/2}\LRp{\pOmega}} \nor{\phi}_{\H^{1/2}\LRp{\pOmega}} \nor{\I - \Pih}_{\H^{1/2}\LRp{\pOmega}\to\H^{1/2}\LRp{\pOmega}},
\end{align*}
where we have used the uniform boundedness of $\a$, and definition~\eqnref{ellipticWeakt}. The estimate \eqnref{semiError} thus follows.
\end{proof}

Let $\P: \Hone\LRp{\Omega} \ni \Phit \mapsto \P\Phit \in \Vh_{\phih}\LRp{\Omega}$,  where $\Vh_{\phih} := \LRc{\v \in \Vh\LRp{\Omega}: \eval{v}_{\pOmega} = \phih}$,  be defined as
\[
\int_{\Omega}a\,\Grad \P\Phit \cdot \Grad \vh\,d\Omega = \int_{\Omega}a\,\Grad \Phit \cdot \Grad \vh\,d\Omega, 
\quad \forall \vh \in \VhO.
\]
Note that $\P$ is a well-defined linear bounded map and $\Phih = \P\Phit$, where $\Phih$ is the FEM solution.
\begin{lemma}
\lemlab{discreteError}
There holds
\begin{multline}
\nor{\DtNa{\a}^\dagger-\DtNRhat{\a}}_{\H^{1/2}\LRp{\pOmega}\to\H^{-1/2}\LRp{\pOmega}} \le c\nor{\I - \P}_{\Hone\LRp{\Omega}\to \Hone\LRp{\Omega}} \nonumber\\
+ c\nor{\I - \Pih}_{\H^{1/2}\LRp{\pOmega}\to\H^{1/2}\LRp{\pOmega}},    
\eqnlab{discreteError}
\end{multline}
where $\I$ is the identity map and $c$ is a constant independent of the meshsize $\h$. 
\end{lemma}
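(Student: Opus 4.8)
The plan is to reduce the operator norm to a uniform bound on the bilinear pairing, since
\[
\nor{\DtNa{\a}^\dagger-\DtNRhat{\a}}_{\H^{1/2}\LRp{\pOmega}\to\H^{-1/2}\LRp{\pOmega}}=\sup_{\phi,\psi\neq 0}\frac{\snor{\LRa{\LRp{\DtNa{\a}^\dagger-\DtNRhat{\a}}\phi,\psi}}}{\nor{\phi}_{\H^{1/2}\LRp{\pOmega}}\nor{\psi}_{\H^{1/2}\LRp{\pOmega}}}.
\]
Writing $B\LRp{u,v}:=\int_{\Omega}\a\,\Grad u\cdot\Grad v\,d\Omega$, the definitions \eqnref{DtNat}, \eqnref{DtNha} and \eqnref{DtNreconstructed} give $\LRa{\DtNa{\a}^\dagger\phi,\psi}=B\LRp{\Phit,\Psi}$ and $\LRa{\DtNRhat{\a}\phi,\psi}=B\LRp{\Phih,\Psi^h}$, where $\Phih=\P\Phit$ is the FE solution, $\Psi\in\Hone\LRp{\Omega}$ is any extension of $\psi$, and $\Psi^h\in\Vh\LRp{\Omega}$ is any FE extension with $\eval{\Psi^h}_{\pOmega}=\Pih\psi$. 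The first fact I would record is that both pairings are independent of the chosen extension: two continuous extensions of $\psi$ differ by an element of $\HoneO$, on which $B\LRp{\Phit,\cdot}$ vanishes by \eqnref{ellipticWeakt}, while two FE extensions of $\Pih\psi$ differ by an element of $\VhO$, on which $B\LRp{\Phih,\cdot}$ vanishes because $\Phih$ is discretely $\a$-harmonic. This freedom is precisely what lets me pick convenient extensions below.

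Next I would split the error by inserting $B\LRp{\Phih,\Psi}$, which is purely algebraic and requires no Galerkin orthogonality:
\[
\LRa{\LRp{\DtNa{\a}^\dagger-\DtNRhat{\a}}\phi,\psi}=B\LRp{\LRp{\I-\P}\Phit,\Psi}+B\LRp{\Phih,\Psi-\Psi^h},
\]
using $\Phit-\Phih=\LRp{\I-\P}\Phit$. The first term isolates the solution-discretization error and will produce the $\nor{\I-\P}$ contribution; the second isolates the boundary-data projection error on the test side and will produce the $\nor{\I-\Pih}$ contribution.

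For the first term I would combine continuity of $B$ (from the uniform boundedness of $\a$), $\snor{B\LRp{u,v}}\le c\nor{u}_{\Hone\LRp{\Omega}}\nor{v}_{\Hone\LRp{\Omega}}$, with the stability $\nor{\Phit}_{\Hone\LRp{\Omega}}\le c\nor{\Pih\phi}_{\H^{1/2}\LRp{\pOmega}}\le c\nor{\phi}_{\H^{1/2}\LRp{\pOmega}}$ (well-posedness of \eqnref{ellipticWeakt} and boundedness of $\Pih$), giving a factor $\nor{\I-\P}_{\Hone\LRp{\Omega}\to\Hone\LRp{\Omega}}\nor{\phi}_{\H^{1/2}\LRp{\pOmega}}\nor{\Psi}_{\Hone\LRp{\Omega}}$. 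For the second term I would use the extension freedom: fix a uniformly $\Hone$-stable FE extension $\Psi^h$ of $\Pih\psi$ and set $\Psi:=\Psi^h+E\LRp{\LRp{\I-\Pih}\psi}$, where $E$ is a bounded right inverse of the trace. Then $\eval{\Psi}_{\pOmega}=\Pih\psi+\LRp{\I-\Pih}\psi=\psi$, so $\Psi$ is admissible, and $\Psi-\Psi^h=E\LRp{\LRp{\I-\Pih}\psi}$, whence
\[
\snor{B\LRp{\Phih,\Psi-\Psi^h}}\le c\nor{\Phih}_{\Hone\LRp{\Omega}}\nor{\LRp{\I-\Pih}\psi}_{\H^{1/2}\LRp{\pOmega}}.
\]
Since $\nor{\Phih}_{\Hone\LRp{\Omega}}=\nor{\P\Phit}_{\Hone\LRp{\Omega}}\le c\nor{\phi}_{\H^{1/2}\LRp{\pOmega}}$ and $\nor{\LRp{\I-\Pih}\psi}_{\H^{1/2}\LRp{\pOmega}}\le\nor{\I-\Pih}_{\H^{1/2}\LRp{\pOmega}\to\H^{1/2}\LRp{\pOmega}}\nor{\psi}_{\H^{1/2}\LRp{\pOmega}}$, this yields the $\nor{\I-\Pih}$ factor; with the same choice one checks $\nor{\Psi}_{\Hone\LRp{\Omega}}\le c\nor{\psi}_{\H^{1/2}\LRp{\pOmega}}$, closing the first estimate. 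Dividing by $\nor{\phi}_{\H^{1/2}\LRp{\pOmega}}\nor{\psi}_{\H^{1/2}\LRp{\pOmega}}$ and taking the supremum gives \eqnref{discreteError}.

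Most ingredients above — continuity and coercivity of $B$, well-posedness of the continuous and discrete problems, boundedness of $\Pih$, and the trace/extension theorem — are standard with $\h$-independent constants. The one genuinely delicate point, which I expect to be the main obstacle, is the existence of an FE extension $\Psi^h$ of $\Pih\psi$ whose $\Hone\LRp{\Omega}$-norm is bounded by $\nor{\Pih\psi}_{\H^{1/2}\LRp{\pOmega}}$ \emph{uniformly in} $\h$; without this the first term cannot be controlled. I would discharge it using the discrete $\a$-harmonic extension, whose stability follows from the uniform coercivity of $B$ on $\VhO$, or alternatively a Scott--Zhang quasi-interpolation, both of which supply the required $\h$-independent bound and thereby guarantee that the constant $c$ in \eqnref{discreteError} does not degenerate as $\h\to 0$.
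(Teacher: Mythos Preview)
Your argument is correct and follows the same decomposition as the paper: both insert $B\LRp{\Phih,\Psi}$ to split into a solution-discretization piece and a test-side boundary-projection piece, and both control the first piece via continuity of $B$ and the stability $\nor{\Phit}_{\Hone\LRp{\Omega}}\le c\nor{\phi}_{\H^{1/2}\LRp{\pOmega}}$.

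The only noteworthy difference is in the second term. The paper takes $\Psi$ to be the continuous $\a$-harmonic extension of $\psi$, defines $\tilde{\Psi}$ as the continuous $\a$-harmonic extension of $\Pih\psi$, sets $\Psi^h=\P\tilde{\Psi}$, and then triangulates $\Psi-\Psi^h$ through $\tilde{\Psi}$; this produces an additional $\nor{\I-\P}$ contribution from $\nor{\tilde{\Psi}-\P\tilde{\Psi}}_{\Hone\LRp{\Omega}}$ alongside the $\nor{\I-\Pih}$ term. Your choice $\Psi:=\Psi^h+E\LRp{\LRp{\I-\Pih}\psi}$ is more direct and yields only the $\nor{\I-\Pih}$ factor from this piece, which is slightly cleaner though it does not change the final bound. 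The ``delicate point'' you flag\textemdash uniform-in-$\h$ stability of the discrete extension\textemdash is precisely what the paper sidesteps by using $\P\tilde{\Psi}$, whose stability is inherited from that of $\P$ and of the continuous problem; your proposed discrete $\a$-harmonic extension achieves the same thing.
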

\begin{proof}
We have
\begin{align*}
&\snor{\LRa{\DtNa{\a}^\dagger-\DtNRhat{\a}\phi,\psi}} \\
\le&\snor{\int_{\Omega}\a\,\Grad \LRp{\Phit - \P\Phit} \cdot \Grad \Psi\,d\Omega} + \snor{\int_{\Omega}\a\,\Grad \P\Phit \cdot \Grad \LRp{\Psi - \Psi^\h}\,d\Omega}\\
\le& c \nor{\psi}_{\H^{1/2}\LRp{\pOmega}} \nor{\phi}_{\H^{1/2}\LRp{\pOmega}}\nor{\I - \P}_{\Hone\LRp{\Omega}\to \Hone\LRp{\Omega}} \\
 &    + c \nor{\phi}_{\H^{1/2}\LRp{\pOmega}}\LRp{\nor{\Psi - \tilde{\Psi}}_{\Hone\LRp{\Omega}}+ \nor{\tilde{\Psi} - \P\tilde{\Psi}}_{\Hone\LRp{\Omega}}} \\
 \le& c\nor{\psi}_{\H^{1/2}\LRp{\pOmega}} \nor{\phi}_{\H^{1/2}\LRp{\pOmega}}\LRp{\nor{\I - \P}_{\Hone\LRp{\Omega}\to \Hone\LRp{\Omega}} + \nor{\I - \Pih}_{\H^{1/2}\LRp{\pOmega}\to\H^{1/2}\LRp{\pOmega}}},
\end{align*}
where we have defined $\tilde{\Psi}$ as the solution \eqnref{ellipticWeakt} with boundary data $\Pih\psi$ and taken $\Psi^h = \P\tilde{\Psi}$.
\end{proof}

\begin{proof}[Proof of Theorem \theoref{asymptoticUniquenessFull}]
We provide the proof of the first assertion as the others are obvious owing to \eqref{eqn:well_cont} and the triangle inequality. From Lemmas \lemref{semiError}--\lemref{discreteError} and the triangle inequality we need to show that
\[
\lim_{\h\to 0} \nor{\I - \P}_{\Hone\LRp{\Omega}\to \Hone\LRp{\Omega}} = 0, \quad \text{ and }
 \lim_{\h\to 0}\nor{\I - \Pih}_{\H^{1/2}\LRp{\pOmega}\to\H^{1/2}\LRp{\pOmega}} = 0.
\]
It is sufficient to prove the former as the proof for the latter is similar. By definition we have
\begin{multline*}
\nor{\I - \P}_{\Hone\LRp{\Omega}\to \Hone\LRp{\Omega}} = 
\sup_{\nor{\Psi}_{\Hone\LRp{\Omega} \le 1}} \sup_{\nor{\Phi}_{\Hone\LRp{\Omega} \le 1}}\LRa{\LRp{\I-\P}\Psi,\Phi}_{\Hone\LRp{\Omega}}
\\= \LRa{\LRp{\I-\P}\Psi^*,\Phi^*}_{\Hone\LRp{\Omega}},
\end{multline*}
where $\LRa{\cdot,\cdot}_{\Hone\LRp{\Omega}}$ denotes the inner product in $\Hone\LRp{\Omega}$, and we have used the fact that the suprema are attainable \cite{James57,James64} at some $\Psi^*$ and $\Phi^*$. The density of the finite element space $\Vh\LRp{\Omega}$ in $\Hone\LRp{\Omega}$ as $\h \to 0$ concludes the proof of the first assertion. 
\end{proof}

\end{appendix}
\bibliographystyle{siamplain}
\bibliography{ref,references,Extra}

\end{document}